\def\thm@space@setup{%
  \thm@preskip=\parskip \thm@postskip=0pt
}
\newtheorem{teo}{Theorem}[section]
\newtheorem{lem}[teo]{Lemma}
\newtheorem{prop}[teo]{Proposition}
\newtheorem{cor}[teo]{Corollary}
\newtheorem*{teo*}{Theorem}
\newtheorem{cnj}{Conjecture}
\newtheorem*{teo:maxvol}{Theorem \ref{maxvol}}
\newtheorem*{teo:cong}{Theorem \ref{teo:cong}}
\newtheorem*{teo:maxvolconj}{Theorem \ref{teovol}}
\newtheorem*{lem:stima3}{Lemma \ref{stima3}}
\newtheorem*{lem:symm}{Lemma \ref{symm}}
\newtheorem*{prop:1}{Proposition \ref{prop:degen}}
\newtheorem*{prop:2}{Proposition \ref{prop:gener}}
\newtheorem*{prop:3}{Proposition \ref{prop:impropideal}}
\theoremstyle{definition}
\newtheorem{dfn}[teo]{Definition}
\newtheorem*{dfn*}{Definition}
\newtheorem{oss}[teo]{Remark}
\newtheorem*{dom*}{Question}
\newtheorem*{cng:maxvol}{The Maximum Volume Conjecture}
\numberwithin{equation}{section}
\newcommand{\ra}{\rightarrow}
\newcommand{\vol}{\mathrm{Vol}}
\DeclarePairedDelimiter\floor{\lfloor}{\rfloor}
\newcommand{\h}{\mathbb{H}^3}
\newcommand\abs[1]{\left|#1\right|}
\title{An upper bound conjecture for the Yokota invariant}
\author{Giulio Belletti}
\address{B\^{a}timent 307 Rue Michel Magat, 91400 Orsay, France}
\email{gbelletti451@gmail.com}
\urladdr{https://sites.google.com/view/giulio-bellettis-homepage/}
\date{}
\begin{document}

\begin{abstract}
 We conjecture an upper bound on the growth of the Yokota invariant of polyhedral graphs, extending a previous result on the growth of the $6j$-symbol. Using Barrett's Fourier transform we are able to prove this conjecture in a large family of examples. As a consequence of this result, we prove the Turaev-Viro Volume Conjecture for a new infinite family of hyperbolic manifolds.
\end{abstract}

\maketitle

\tableofcontents
\section{Introduction}

In \cite{cyvolconj} Chen and Yang proposed and provided extensive computations for the following conjecture, relating the hyperbolic volume of a manifold to its Turaev-Viro invariants $TV_r$ (see \cite[Page 869]{TV} for the original definition):

\begin{cnj}[The Turaev-Viro Volume Conjecture]\label{volconj}
 Let $M$ be a hyperbolic $3$-manifold, either closed, with cusps, or compact with geodesic boundary. Then as $r$ varies along the odd natural numbers,
 \begin{equation}
 \lim_{r\ra\infty} \frac{2\pi}{r}\log\left(TV_r\left(M,e^{\frac{2\pi i}{r}}\right)\right)=\mathrm{Vol}(M)
 \end{equation}
\end{cnj}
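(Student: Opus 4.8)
The plan is to read the exponential growth rate of $TV(M,e^{2\pi i/r})$ directly off the Turaev--Viro state sum and to identify it, by a saddle--point analysis, with the maximal total volume of a decomposition of $M$ into generalized hyperbolic tetrahedra. First I would fix a triangulation $\mathcal{T}$ adapted to the geometry of $M$ (an ideal triangulation in the cusped case, a partially truncated one in the geodesic--boundary case, an honest finite triangulation in the closed case) and write
\[
TV\!\left(M, e^{\frac{2\pi i}{r}}\right) = \sum_{c} \prod_{e} w_e(c) \prod_{\tau} 6j_\tau(c),
\]
where $c$ runs over admissible $r$--colorings of the edges of $\mathcal{T}$, the $w_e$ are edge weights, and each tetrahedron $\tau$ contributes a quantum $6j$--symbol $6j_\tau(c)$. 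The problem is thereby reduced to understanding the asymptotics of the individual terms and of the whole sum as $r\to\infty$ along odd integers.

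The key analytic input is the behaviour of the quantum $6j$--symbol at $q=e^{2\pi i/r}$. When the six colors grow linearly in $r$, with rescaled colors $\theta_i$ playing the role of the dihedral angles of a (possibly hyperideal) hyperbolic tetrahedron, the Costantino--Murakami type asymptotics give
\[
\frac{2\pi}{r}\log\bigl|6j_\tau(c)\bigr| = \vol\bigl(\tau(\theta)\bigr) + o(1),
\]
where $\vol(\tau(\theta))$ is the volume of the associated hyperbolic tetrahedron. Summing over all tetrahedra, the growth rate of a single colored term is $\sum_\tau \vol(\tau(\theta))$, the total volume of the polyhedral structure on $\mathcal{T}$ carried by the angle data $\theta$.

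Next I would apply the Laplace method to the full sum. Since the number of admissible colorings grows only polynomially in $r$ while each term grows exponentially, the leading contribution comes from the angle structure $\theta^\ast$ maximising the functional $\theta\mapsto\sum_\tau\vol(\tau(\theta))$ over the polytope of admissible angle structures on $\mathcal{T}$. The heart of the argument is the \emph{maximum volume principle} that organises this paper, which I would combine with Rivin's characterisation of the complete hyperbolic metric as the unique interior critical point of the total volume functional to conclude
\[
\sup_{\theta}\; \sum_{\tau}\vol\bigl(\tau(\theta)\bigr) = \vol(M),
\]
attained precisely at the complete structure. Feeding this back into the state sum yields the desired limit, provided the subexponential prefactors are polynomially bounded above and below.

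The main obstacle I anticipate is twofold. First, the $6j$--symbols are genuinely oscillatory, so naive term--by--term estimates overshoot: I must rule out destructive interference by establishing a non--degeneracy (negative--definite Hessian) statement for the volume functional at $\theta^\ast$ and a \emph{uniform} error bound in the $6j$--asymptotics, currently controlled only for non--degenerate, positively oriented tetrahedra. Second — and this is where full generality is delicate — it is not known that every hyperbolic $3$--manifold admits a geometric (strictly positively angled) triangulation, so the maximiser may lie on the boundary of the admissible polytope or force flat or negatively oriented pieces. To circumvent this I would first settle the cusped case, where Casson--Rivin theory guarantees that the volume functional on the nonempty space of angle structures is maximised at the complete metric, and then reach the closed and geodesic--boundary cases by Dehn filling and doubling together with the continuity of both the $TV$--growth rate and the volume under these operations. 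Making this last interchange of limits rigorous, and controlling the boundary behaviour of the state sum it requires, is the step I expect to be hardest.
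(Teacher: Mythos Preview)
The statement you are attempting to prove is \emph{Conjecture~\ref{volconj}}, not a theorem of the paper. The paper does not prove it in general and does not claim to; it is an open problem. What the paper does prove is Conjecture~\ref{volconj} for a specific new infinite family of cusped manifolds (Theorem~\ref{teo:volconj}), namely the exteriors $E_L$ of certain links in $S^3\#^{h-1}(S^1\times S^2)$ built out of right-angled ideal octahedra. So there is no ``paper's own proof'' of the general statement to compare against.

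Your outline is the natural first attempt at the full conjecture, and you have already identified exactly why it does not go through: the oscillatory nature of the $6j$-symbols prevents a direct Laplace-type argument (cancellation is genuinely possible and is in fact expected when the root of unity is $e^{\pi i/r}$ rather than $e^{2\pi i/r}$), and the existence of a geometric triangulation with the maximiser in the interior of the angle polytope is unknown for general $M$. Your proposed workaround---passing to cusped manifolds via Casson--Rivin and then recovering the closed and boundary cases by Dehn filling and doubling together with ``continuity of the $TV$-growth rate''---is not a proof: continuity of $\lim_r \tfrac{2\pi}{r}\log TV_r$ under Dehn filling is precisely what one would need to know, and it is not established. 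Likewise, uniform asymptotics for the $6j$-symbol over the full admissible range (including degenerate and boundary configurations) are not available.

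By contrast, the paper's method for its special family avoids all of this. It never analyses a triangulation of $M$ or a saddle point. Instead it (i) identifies $TV_r(E_L)$ with a sum of $\lvert RT_r\rvert^2$ terms, each equal to a Yokota invariant $Y_r(\Gamma,col)$ of a planar graph $\Gamma$; (ii) uses the sharp upper bound of Theorem~\ref{teo:bound} on a single $6j$-symbol together with the multiplicativity of $Y_r$ under vertex sums and Barrett's Fourier transform (Proposition~\ref{prop:fourier}) to bound every term by $e^{r\cdot gv_8/\pi}$ up to polynomial factors; and (iii) exhibits one explicit coloring (all colors $\sim r/2$) that realises this growth, so the sum of nonnegative terms has no cancellation to worry about. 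The geometry enters only through the elementary fact that $E_L$ is glued from $4g$ right-angled octahedra, giving $\vol(E_L)=4gv_8$. None of the analytic obstacles in your plan arise, but the argument is confined to this family and says nothing about the general conjecture.
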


This conjecture has been verified for the complements of the Borromean rings \cite{DKY}, of the figure eight knot \cite{DKY}, all the hyperbolic Dehn surgeries on the figure eight knot (for integral surgeries in \cite{ohtdf} and later for rational surgeries in \cite{wongyangvolume}), and all complements of fundamental shadow links \cite{bound6j}.

A useful tool introduced in \cite{bound6j} to study the asymptotic behavior of quantum invariants such as $TV_r$ is a sharp upper bound on the growth of the $6j$-symbol, which is the basic building block in their definition. Such an upper bound can be used to prove very quickly the Volume Conjecture for complements of fundamental shadow links.

The upper bound just mentioned can be interpreted as an upper bound for the Yokota invariant $Y_r$, which is an invariant of embedded graphs (see Definition \ref{dfn:Yok}). Indeed, the square of the $6j$-symbol is also the Yokota invariant of the tetrahedral graph; thus it is natural to ask if an upper bound analogous to the one of \cite{bound6j} holds for any polyhedral graph (which is to say, any graph which is the $1$-skeleton of a hyperbolic polyhedron). We propose the following:

\begin{cnj}[The Upper Bound Conjecture]\label{cng:upperbound}
  Let $r>2$ be odd. If $\Gamma$ is a polyhedral graph and $col$ is any $r$-admissible coloring of its edges (see Definition \ref{dfn:adm}), then
  \begin{displaymath}
   \frac{\pi}{r}\log\left\lvert Y_r(\Gamma,col)\right\rvert\leq \sup_{P}\mathrm{Vol}(P)+O_{r\ra\infty}\left(\frac{\log(r)}{r}\right)
  \end{displaymath}
  where $P$ varies among all proper generalized hyperbolic polyhedra with $1$-skeleton $\Gamma$ (see Definition \ref{dfn:poly}; these are hyperbolic polyhedra with possibly hyperideal vertices) and $\mathrm{Vol}(P)$ is the hyperbolic volume of $P$.
  
 Moreover, the inequality is sharp, with equality attained by the sequence of colorings
giving the color $\frac{r-2\pm1}{2}$ to each edge (the sign is chosen so that the colors are even).
\end{cnj}

We are able to prove the Upper Bound Conjecture for a large family of examples:

\begin{teo:maxvolconj}
 The Upper Bound Conjecture is verified for any planar graph obtained from the tetrahedron by applying any sequence of the following two moves:
 \begin{itemize}
  \item blowing up a trivalent vertex (see Figure \ref{fig:trunc}) or
  \item triangulating a triangular face (see Figure \ref{fig:triang}).
 \end{itemize} 
\end{teo:maxvolconj}

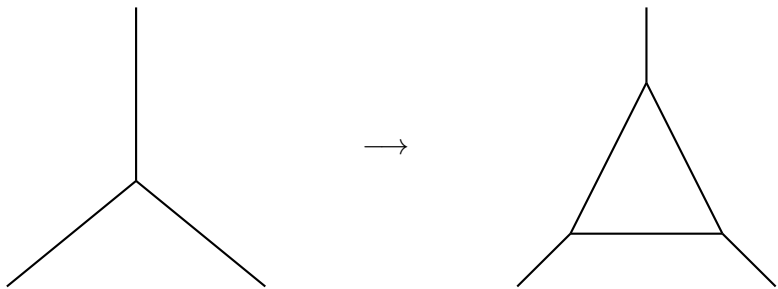
\begin{figure}
    \centering
 \begin{minipage}{.45\textwidth}
  \centering
\begin{tikzpicture}[scale=0.5]
 \draw [thick](-1.4,-1.4)-- (2,1.4);
 \draw [thick] (2,1.4)-- (2,6);
 \draw [thick] (2,1.4)-- (5.4,-1.4);
\end{tikzpicture}

 \end{minipage}
 $\longrightarrow$
 \begin{minipage}{.45\textwidth}
 \centering    
\begin{tikzpicture}[scale=0.5]

\draw [white] (-2,0)--(-1,0);
 \draw [thick](0,0)-- (2,4);
 \draw [thick] (4,0)-- (2,4);
 \draw [thick] (0,0)-- (4,0);
 \draw [thick] (2,4)--(2,6);
 \draw [thick] (-1.4,-1.4)--(0,0);
 \draw [thick] (4,0)--(5.4,-1.4);
\end{tikzpicture}
  
 \end{minipage}

\caption{Truncating a vertex}\label{fig:trunc}
\end{figure}

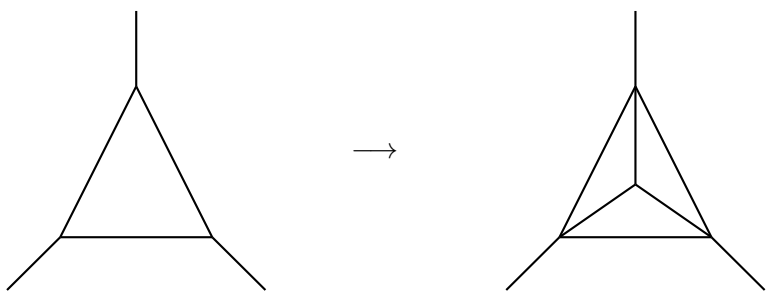
\begin{figure}
    \centering
      \begin{minipage}{.45\textwidth}
 \centering    
\begin{tikzpicture}[scale=0.5]

\draw [white] (-2,0)--(-1,0);
 \draw [thick](0,0)-- (2,4);
 \draw [thick] (4,0)-- (2,4);
 \draw [thick] (0,0)-- (4,0);
 \draw [thick] (2,4)--(2,6);
 \draw [thick] (-1.4,-1.4)--(0,0);
 \draw [thick] (4,0)--(5.4,-1.4);
\end{tikzpicture}
   \end{minipage}
 $\longrightarrow$
  \begin{minipage}{.45\textwidth}
 \centering    
\begin{tikzpicture}[scale=0.5]

\draw [white] (-2,0)--(-1,0);
 \draw [thick](0,0)-- (2,4);
 \draw [thick] (4,0)-- (2,4);
 \draw [thick] (0,0)-- (4,0);
 \draw [thick] (2,4)--(2,6);
 \draw [thick] (-1.4,-1.4)--(0,0);
 \draw [thick] (4,0)--(5.4,-1.4);
 
 \draw [thick](0,0)-- (2,1.4);
 \draw [thick] (2,1.4)-- (2,4);
 \draw [thick] (2,1.4)-- (4,0);
\end{tikzpicture}
   \end{minipage}
\caption{Triangulating a face}\label{fig:triang}
\end{figure}

The Upper Bound Conjecture naturally leads to the question of what is the supremum of all volumes of polyhedra sharing the same $1$-skeleton. This is answered in \cite[Theorem 4.2]{maxvol} by the following

\begin{teo}
 For any polyhedral graph $\Gamma$,
 $$\sup_{P}\mathrm{Vol}(P)=\vol(\overline{\Gamma})$$
 where $P$ varies among all proper generalized hyperbolic polyhedra with $1$-skeleton $\Gamma$ and $\overline{\Gamma}$ is the rectification of $\Gamma$.
\end{teo}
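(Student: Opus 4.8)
The plan is to reduce the statement to the maximisation of a concave function on a polytope and then identify the maximiser combinatorially. First I would recall the dihedral-angle parametrisation of proper generalized hyperbolic polyhedra (Bao--Bonahon, in the form set up in \cite{maxvol}): those with $1$-skeleton $\Gamma$ correspond bijectively to the points of a bounded convex polytope $\mathcal{A}_\Gamma$ of admissible dihedral-angle vectors $\theta\colon E(\Gamma)\to(0,\pi)$, and the hyperbolic volume descends to a function $V\colon\mathcal{A}_\Gamma\to\mathbb{R}_{>0}$ extending continuously to the compact closure $\overline{\mathcal{A}_\Gamma}$. As hyperbolic volume is strictly concave in the dihedral angles, this extension has a unique maximum point $\theta^\star\in\overline{\mathcal{A}_\Gamma}$; hence $\sup_P\vol(P)=V(\theta^\star)$ and it remains to locate $\theta^\star$ and evaluate $V$ there.

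The next step uses the Schläfli formula $dV=-\tfrac12\sum_{e\in E(\Gamma)}\ell_e\,d\theta_e$. Since every edge of a proper generalized polyhedron has positive (truncated) length, $V$ is \emph{strictly decreasing in each coordinate} $\theta_e$ on all of $\mathcal{A}_\Gamma$, so $\theta^\star$ is forced to be the unique point of $\overline{\mathcal{A}_\Gamma}$ from which no dihedral angle can be lowered without leaving the closure. The geometric content of pushing every angle down as far as the feasibility constraints allow is that every vertex of $\Gamma$ is driven strictly past infinity and truncated, and the truncation planes are squeezed together until each edge of $\Gamma$ has length zero; at that stage the truncated figure has, at each former edge of $\Gamma$, two truncation faces and two (now shrunken) faces of $\Gamma$ meeting cyclically — this is exactly the local picture of the rectification $\overline{\Gamma}$ — and, since a truncation plane always meets the faces it cuts orthogonally, every dihedral angle of this figure is $\pi/2$. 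In other words $\theta^\star$ represents the right-angled realisation of $\overline{\Gamma}$ furnished by Andreev's (and Rivin's) theorem, so that $V(\theta^\star)=\vol(\overline{\Gamma})$. (In the non-generic cases where $\overline{\Gamma}$ is itself a degenerate polyhedron — some triangular faces collapsing to finite vertices — the same analysis holds with the corresponding edges of $\Gamma$ only partially crushed, matching the degenerations built into the definition of $\overline{\Gamma}$.)

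It then remains to check that $\vol(\overline{\Gamma})$ is genuinely approached by polyhedra with $1$-skeleton $\Gamma$, and is not merely an upper bound. For this I would start from the right-angled realisation $R$ of $\overline{\Gamma}$ and ``un-rectify'' it: split each $4$-valent ideal vertex of $R$ back into a short edge, choosing the splitting so that the two faces of $\Gamma$ meeting there become adjacent again (equivalently, pull the mutually tangent truncation planes slightly apart). For all sufficiently small splitting parameters this yields genuine proper generalized hyperbolic polyhedra with $1$-skeleton exactly $\Gamma$ and all vertices hyperideal, and by continuity of $V$ on $\overline{\mathcal{A}_\Gamma}$ their volumes converge to $\vol(R)=\vol(\overline{\Gamma})$. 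Together with the previous paragraph this gives $\sup_P\vol(P)=\vol(\overline{\Gamma})$.

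I expect the main obstacle to be the identification carried out in the second step: proving rigorously that the volume-maximising angle vector $\theta^\star$ is exactly the one representing $\overline{\Gamma}$ with all dihedral angles $\pi/2$. This requires understanding the facet structure of $\mathcal{A}_\Gamma$ precisely enough to know which inequalities are active at $\theta^\star$ (the edge-crushing ones, together with the circuit relations encoding the degenerations of $\overline{\Gamma}$, and nothing else) and to confirm that the limiting polyhedron remains non-degenerate enough that its volume really is the continuous limit of $V(\theta)$ — in particular that no hidden collapse absorbs positive volume. A subsidiary point, needed even to make sense of the right-hand side, is the existence and uniqueness of the right-angled realisation of $\overline{\Gamma}$ (and the correct bookkeeping of collapsed triangular faces), which follows from Andreev's and Rivin's theorems but needs care for non-generic $\Gamma$.
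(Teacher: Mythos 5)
First, note that the paper you are working from does not prove this statement at all: it is quoted verbatim from \cite[Theorem 4.2]{maxvol}, so there is no internal proof to compare against; your proposal has to stand on its own, and as written it has a genuine gap at its foundation. Your first step asserts that proper generalized hyperbolic polyhedra with $1$-skeleton $\Gamma$ are parametrized bijectively by a bounded \emph{convex polytope} of dihedral-angle vectors, and that volume is a strictly concave function on it. Neither claim is available in this generality. The Bao--Bonahon angle parametrization applies only to hyperideal polyhedra (all vertices ideal or hyperideal); a generalized hyperbolic polyhedron in the sense of Definition \ref{dfn:poly} may also have real vertices, and compact hyperbolic polyhedra are in general not determined by their dihedral angles (Andreev-type rigidity needs non-obtuse angles, and Rivin--Hodgson parametrize compact polyhedra by the dual metric, not by angles), nor is the set of realizable angle vectors for a fixed $1$-skeleton known to be convex. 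Likewise, concavity of volume in the angles is a theorem for ideal (and, with work, hyperideal) polyhedra, not for the mixed class considered here, where volume is not even a well-defined function of the angle vector. Since your unique maximizer $\theta^\star$, the ``monotone descent'' inside a polytope, and the identification of the active facets all live inside this parametrization, the core of the argument collapses as stated.

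The Schl\"afli-formula idea (volume increases as dihedral angles decrease, with the rectification $\overline{\Gamma}$ as the angle-zero limit) is indeed the right heuristic and is the engine of the actual proof in \cite{maxvol}, but making it rigorous is exactly where the work lies: one must first show that the supremum can be computed over polyhedra with all vertices hyperideal (so that the angle-polytope description and Schl\"afli monotonicity genuinely apply), and then control the degeneration as all angles tend to $0$ --- edges of the $1$-skeleton can collapse, vertices can change type, properness can fail along a deformation, and continuity of the (truncated) volume at the rectification must be proved, not invoked. Your closing paragraph acknowledges some of this, but treats it as a technical check rather than as the main content; in particular the claim that $V$ is ``strictly decreasing in each coordinate on all of $\mathcal{A}_\Gamma$'' presupposes one can vary a single dihedral angle freely within the class of proper polyhedra with $1$-skeleton $\Gamma$, which is precisely what has not been established. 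The ``un-rectification'' argument for the lower bound is sound in spirit (it matches the limit of hyperideal polyhedra with angles tending to $0$ used in \cite{maxvol}), but it too relies on the volume continuity statement at the degenerate limit that your proposal leaves unproved.
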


The rectification of a graph is defined in \cite[Section 3.4]{maxvol} (see also Remark \ref{rmk:rect}); for the purpose of this paper it suffices to say that $\overline{\Gamma}$ is the polyhedron with $1$-skeleton $\Gamma$ with every edge tangent to $\partial\h$ in the Klein model of hyperbolic space (hence, which has dihedral angle $0$ at each edge). This polyhedron can be canonically truncated to give an ideal right-angled hyperbolic polyhedron, hence it makes sense to speak of $\vol(\overline{\Gamma})$ as the volume of the truncation. 

As an application of Theorem \ref{teovol}, we prove in Theorem \ref{teo:volconj} that the Turaev-Viro Volume Conjecture holds for a new infinite family of cusped manifolds. These are complements of certain links in $S^3\#^g(S^1\times S^2)$; their hyperbolic structure is obtained by gluing right-angled octahedra.

In Section \ref{sec:yok} we set the notation, give the basic properties of the Kauffman bracket and define the Yokota invariant. In Section \ref{sec:volconj} we discuss previous Volume Conjectures for polyhedra and state the Upper Bound Conjecture. In Section \ref{sec:fourier} we introduce the Fourier transform of Barrett, and use it to prove Theorem \ref{teovol}. Section \ref{sec:tvvolconj} contains the proof of the Turaev-Viro Volume Conjecture for a new family of manifolds. Finally in an appendix we propose numerical evidence for a related Volume Conjecture for polyhedra.

\textbf{Acknowledgments.} I wish to thank my advisors Bruno Martelli and Francesco Costantino for their guidance and support. Furthermore I wish to thank Renaud Detcherry, Efstratia Kalfagianni and Tian Yang for their comments on a preliminary version of this paper. Finally I would like to thank the anonymous referee for their detailed suggestions which greatly improved the readability of this paper. Part of this work supported by the Deutsche Forschungsgemeinschaft under Germany’s Excellence Strategy EXC-2181/1 -390900948 (the Heidelberg STRUCTURES Cluster of Excellence).

\section{The Kauffman bracket and the Yokota invariant}\label{sec:yok}

\subsection{The Kauffman bracket}
Throughout
the rest of the paper $r\geq 3$ is an odd integer and $q=e^{\frac{2\pi i}{r}}$. All the definitions we give in this section are standard; the only notable difference is that in some papers (e.g. \cite{bar}) the graphs are colored with half-integer colors, while here we use integers.

Given a non-negative integer $n$, the \emph{quantum integer}
$[n]$ is defined as $\frac{q^n-q^{-n}}{q-q^{-1}}=\frac{\sin\left(2\pi n/r\right)}{\sin\left(2\pi/r\right)}$, and the quantum factorial $[n]!$ is $\prod_{i=1}^n[i]$ (with the convention that $[0]!=1$). Furthermore,
we denote with $I_r$ the set of all even non-negative integers at most equal to $r-2$.

\begin{oss}
 Because of the choice of root of unity $q$, we need to work with the $SO(3)$ version of the quantum invariants, rather than the $SU(2)$ version. This essentially amounts to using only even numbers as colors; a brief overview of how these invariants are related can be found for example in Section 2 of \cite{DKY}. Because of this, some terms in the upcoming formulas appear redundant; we still include them to keep the notation uniform with other papers dealing with the $SU(2)$ version.
\end{oss}

\begin{dfn}\label{dfn:radm}
 We say that a triple $(a,b,c)\in I_r^3$ is \emph{$r$-admissible} if
 \begin{itemize}
  \item $a,b,c\leq r-2$;
  \item $a+b+c$ is even and $a+b+c\leq 2r-4$;
  \item $a\leq b+c$, $b\leq a+c$ and $c\leq a+b$. 
 \end{itemize}
We say that a $6$-tuple $(n_1,n_2,n_3,n_4,n_5,n_6)$ of elements in $I_r$ is \emph{$r$-admissible} if the $4$ triples $(n_1,n_2,n_3)$,
$(n_1,n_5,n_6)$, $(n_2,n_4,n_6)$ and $(n_3,n_4,n_5)$ are $r$-admissible.
\end{dfn}

For $n\in \mathbb{N}$ define
\begin{equation}
 \Delta_n=(-1)^{n+1}[n+1].
\end{equation}

For an $r$-admissible triple $(a,b,c)$ we can define
\begin{equation}
 \Theta(a,b,c)=(-1)^{\frac{a+b+c}{2}}\frac{[\frac{a+b+c}{2}+1]!}{[\frac{a+b-c}{2}]![\frac{a-b+c}{2}]![\frac{-a+b+c}{2}]!}
\end{equation}

and $\Delta(a,b,c):=\Theta(a,b,c)^{-\frac{1}{2}}$.
 Notice that the number inside the square root is real; by convention we take the positive square root of a positive number, and the square root with positive imaginary part of a negative number.

\begin{dfn}\label{dfn:adm}

 An \emph{$r$-admissible coloring} for a tetrahedron $T$ is an assignment of an $r$-admissible $6$-tuple $(n_1,n_2,n_3,n_4,n_5,n_6)\in I_r^6$ to the set of edges of $T$, as
 shown in Figure \ref{fig:colamm}. More generally, we say that an $r$-admissible coloring for a trivalent graph $\Gamma\subseteq S^3$ is an assignment of elements of $I_r$ to the edges of $\Gamma$ such that the colors at each vertex form an $r$-admissible triple. Even more generally we say that an assignment of elements of $I_r$ to edges of a (not necessarily trivalent)  graph is a \emph{coloring}, and a graph $\Gamma$ together with its coloring $col$ is a \emph{colored graph} $(\Gamma,col)$.
\end{dfn}
 
If $v$ is a trivalent vertex of a graph whose incident edges are colored by an $r$-admissible triple $a,b,c$ we write for short $\Theta(v)$ and $\Delta(v)$ instead of $\Theta(a,b,c)$ and $\Delta(a,b,c)$.

Moreover, for an $r$-admissible $6$-tuple $(n_1,n_2,n_3,n_4,n_5,n_6)$ we can define its $6j$-symbol as usual as

\begin{gather}
 \begin{vmatrix}
   n_1 &n_2&n_3\\
   n_4&n_5&n_6
  \end{vmatrix}=\prod_{i=1}^4\Delta(v_i)
 \sum_{z=\max T_i}^{\min Q_j}\frac{(-1)^z[z+1]!}{\prod_{i=1}^4[z-T_i]!\prod_{j=1}^3[Q_j-z]!}\label{sixj}
\end{gather}
where:
\begin{itemize}
 \item $v_1=(n_1,n_2,n_3)$, $v_2=(n_1,n_5,n_6)$, $v_3=(n_2,n_4,n_6)$, $v_4=(n_3,n_4,n_5)$;
 \item $T_1=\frac{n_1+n_2+n_3}{2}$, $T_2=\frac{n_1+n_5+n_6}{2}$, $T_3=\frac{n_2+n_4+n_6}{2}$ and $T_4=\frac{n_3+n_4+n_5}{2}$;
 \item $Q_1=\frac{n_1+n_2+n_4+n_5}{2}$, $Q_2=\frac{n_1+n_3+n_4+n_6}{2}$ and $Q_3=\frac{n_2+n_3+n_5+n_6}{2}$.
\end{itemize}

By convention we define the $6j$-symbol of a non $r$-admissible tuple to be equal to $0$.

\begin{figure}
\centering
 \begin{tikzpicture}
 \draw [thick] (0,0)-- node[below] {$n_4$}(4,0);
 \draw [thick] (0,0)-- node[above, left] {$n_6$}(2,4);
 \draw [thick] (2,4)-- node[above,right] {$n_5$}(4,0);
 \draw [thick] (0,0)-- node[below,right] {$n_2$}(2,1.4);
 \draw [thick] (2,1.4)-- node[left] {$n_1$} (2,4);
 \draw [thick] (2,1.4)-- node [below,left] {$n_3$}(4,0);
\end{tikzpicture}
\caption{An $r$-admissible coloring for a tetrahedron}\label{fig:colamm}
\end{figure}
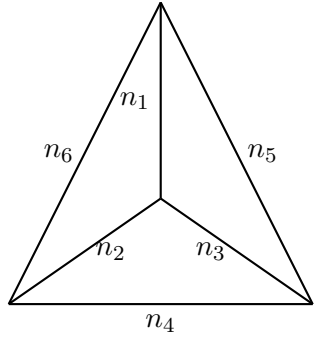

\begin{figure}
\centering
 \begin{tikzpicture}
 \draw [thick] (0,0) circle (1cm);
 \draw [thick] (0,1)--(0,-1);
\end{tikzpicture}
\caption{A Theta graph}\label{fig:theta}
\end{figure}
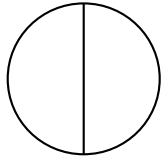
 
 The \emph{Kauffman bracket} is an invariant of \emph{trivalent framed graphs}; before defining the Kauffman bracket we remind the definition of framed graphs.
 
\begin{dfn}
 A \emph{framed graph} $\Gamma\subseteq S^3$ is a graph in $S^3$ together with a $2$-dimensional oriented thickening, considered up to isotopy. More precisely, a framed graph $\Gamma$ is a pair $G,F$ with $G$ an embedded graph in $S^3$ and $F$ an embedded orientable surface containing $G$ as a deformation retract. As is usual for framed links, we draw planar diagrams of framed graphs with over and under crossing information, and such that the ``thickness'' of the surface always lies flat on the projection plane.
\end{dfn}

\begin{dfn}\label{def:kauf}
 The \emph{Kauffman bracket} is the unique map $$\langle\cdot\rangle:\{\textrm{colored trivalent framed graphs in } S^3\}\ra \C$$ satisfying the following properties:
 \begin{enumerate}[(i)]
 \item\label{prop:prima} If $\Gamma$ is the planar circle colored with $n\in I_r$ then $\langle\Gamma\rangle=\Delta_n$;
  \item If $\Gamma$ is a Theta graph (see Figure \ref{fig:theta}) colored with the $r$-admissible triple $\left(a,b,c\right)\in I_r^3$ then $\langle\Gamma\rangle=1$;
  \item If $\Gamma$ is a tetrahedron colored with the $r$-admissible $6$-tuple $(n_1,\dots,n_6)\in I_r^6$ then $$\langle\Gamma\rangle=\begin{vmatrix}
   n_1 &n_2&n_3\\
   n_4&n_5&n_6
  \end{vmatrix};$$
  \item\label{prop:fusion} The \emph{fusion rule}:
\begin{equation}\label{eq:fusion} \left\langle\vcenter{\hbox{\includegraphics[width=2cm]{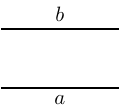}}}\right\rangle=\sum_{i\in I_r}\Delta_i\left\langle\vcenter{\hbox{\includegraphics[width=3cm]{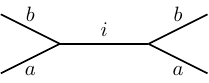}}}\right\rangle\end{equation}
  \item \label{prop:bridge} If $\Gamma$ has a bridge (that is to say, an edge that disconnects the graph if removed) colored with $i\neq 0$, then $\langle\Gamma\rangle=0$;
  \item If at some vertex of $\Gamma$ the colors do not form an $r$-admissible triple, then $\langle\Gamma\rangle=0$;
  \item \label{prop:zero}If $\Gamma$ is colored with an $r$-admissible coloring such that the color of an edge $e$ is equal to $0$, then \begin{equation} \left\langle\vcenter{\hbox{\includegraphics[width=2cm]{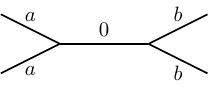}}}\right\rangle=\frac{1}{\sqrt{\Delta_a\Delta_b}}\left\langle\vcenter{\hbox{\includegraphics[height=0.8cm]{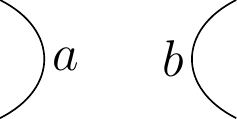}}}\right\rangle\end{equation}
  
  $\langle\Gamma\rangle=\frac{1}{\sqrt{\Delta_a\Delta_b}}\langle\Gamma'\rangle$ where $\Gamma'$ is $\Gamma$ with $e$ removed, and $a,b$ are the colors of the edges that share a vertex with $e$ (notice that since the coloring is $r$-admissible, two edges sharing the same vertex with $e$ will have the same color);
  \item \label{prop:crossing} The undoing of a crossing:
  \begin{displaymath}
   \left\langle\vcenter{\hbox{\includegraphics[height=2cm]{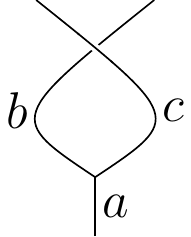}}}\right\rangle=(-1)^{\frac{b+c-a}{2}}q^{\frac{b(b+2)+c(c+2)-a(a+2)}{4}}\left\langle\vcenter{\hbox{\includegraphics{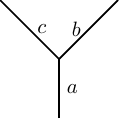}}}\right\rangle
  \end{displaymath}

  \item \label{prop:ultima} If $\Gamma$ is the disjoint union of $\Gamma_1$ and $\Gamma_2$, then $\langle\Gamma\rangle=\langle\Gamma_1\rangle\langle\Gamma_2\rangle$.
 \end{enumerate}

\end{dfn}

It is absolutely not clear from the definition that such a map exists; a proof is in \cite[Chapter 9]{kauflins}. However, it is straightforward to see that Properties \ref{prop:prima}-\ref{prop:ultima} are enough to calculate $\langle\Gamma\rangle$. Taking any planar diagram of $\Gamma$, apply a fusion rule near each crossing, and then undo the crossing using Property \ref{prop:crossing}; therefore we only need to calculate $\langle\cdot\rangle$ on planar graphs. For a planar graph, repeated applications of the Fusion rule create a bridge, and Properties \ref{prop:bridge}, \ref{prop:zero} and \ref{prop:ultima} allow to compute $\langle\Gamma\rangle$ from the Kauffman bracket of two graphs, each with fewer vertices.

\begin{oss}
 There are a few different normalizations of the Kauffman bracket in the literature. Here we use the \emph{unitary normalization}; it should be noted that \cite{kauflins} uses a different one, however the results there apply to the unitary normalization with little modification.
\end{oss}

In what follows sometimes we will color the edges of $\Gamma$ with linear combinations of colors; the Kauffman bracket can be extended linearly to this context. In particular, we will use \emph{Kirby's color} $\Omega:=\sum_{i\in I_r}\Delta_i i $.

\subsection{The definition of the Yokota invariant from the Kauffman bracket}

In this subsection we give an overview of the Yokota invariant, which generalizes the Kauffman bracket invariant of trivalent graphs to graphs with vertices of any valence; it was first introduced in \cite{yok}.

Suppose $\Gamma\subseteq S^3$ is a framed graph with vertices of valence at least $3$; as before $r>2$ is odd and $q=e^{2\pi i/r}$.

For a vertex $v$ of $\Gamma$, we can take a small ball $B$ containing $v$, and replace $\Gamma\cap B$ with a trivalent planar tree in $B$ having the same endpoints in $\partial B\cap \Gamma$ (see Figure \ref{fig:desing}). We call this procedure a \emph{desingularization} of $\Gamma$ at $v$. Notice that if $v$ has valence greater than $3$, then this procedure is not unique; however, any desingularization is related to any other via a sequence of Whitehead moves (see Figure \ref{fig:whitehead}). This fact can be most easily seen by thinking about the dual graph: the vertex corresponds to a polygon and a desingularization corresponds to a choice of enough diagonals to triangulate the polygon. Then a Whitehead move acts on the dual as a diagonal flip, and clearly diagonal flips are enough to go from any choice of diagonals to any other.

\begin{figure}
\centering
 \begin{minipage}{.35\textwidth} \begin{tikzpicture}[scale=0.8]
\centering

\draw[thick] (3,3)--(3,-3);
\draw[thick] (0.5,2)--(5.5,-2);
\draw[thick] (5.5,2)--(0.5,-2);
\end{tikzpicture}
\end{minipage}
$\xrightarrow{\hspace*{1cm}}$
\begin{minipage}{.3\textwidth}
  \begin{tikzpicture}[scale=0.6]
\centering

\draw[thick] (3,3)--(3,0);
\draw[thick] (3,0)--(3,-4);
\draw[thick] (0.5,2)--(3,0);
\draw[thick] (5.5,2)--(3,-1);
\draw[thick](0.5,-4)--(3,-1.5);
\draw[thick] (3,-2)--(5.5,-4);
\end{tikzpicture}
\end{minipage}
\caption{Desingularization of a vertex of valence $6$}\label{fig:desing}
\end{figure}
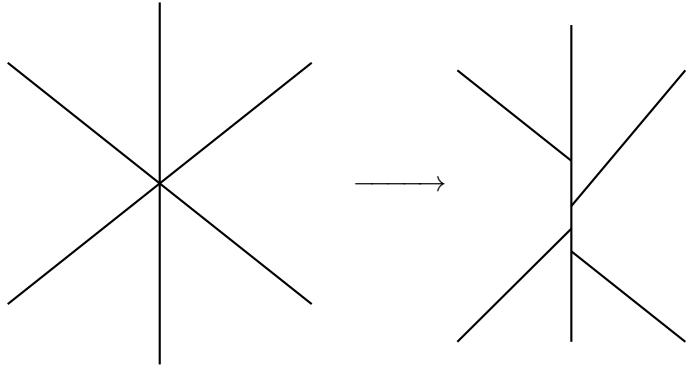

We say that the trivalent graph $\Gamma'$ is a desingularization of $\Gamma$ if it is obtained from $\Gamma$ by desingularization of each vertex of valence at least $4$.

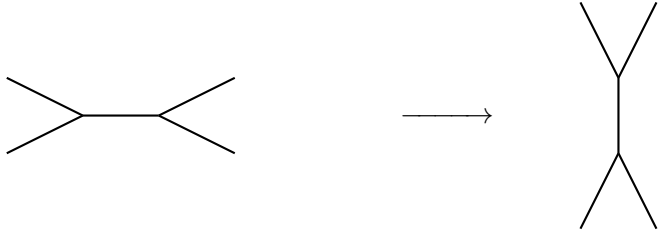
\begin{figure}
 \centering
 \begin{minipage}{.4\textwidth}
    \begin{tikzpicture}
\centering
\draw[thick](-1,0.5)--(0,0);
\draw[thick](-1,-0.5)--(0,0);
\draw[thick] (0,0)--(1,0);
\draw[thick](1,0)--(2,0.5);
\draw[thick](1,0)--(2,-0.5);
\end{tikzpicture}
 \end{minipage}
$\xrightarrow{\hspace*{1.0cm}}$\hspace*{1cm}
\begin{minipage}{.2\textwidth}
  \begin{tikzpicture}[rotate=90]
\centering
\draw[thick](-1,0.5)--(0,0);
\draw[thick](-1,-0.5)--(0,0);
\draw[thick] (0,0)--(1,0);
\draw[thick](1,0)--(2,0.5);
\draw[thick](1,0)--(2,-0.5);
\end{tikzpicture}
 \end{minipage}
 \caption{A Whitehead move}\label{fig:whitehead}
\end{figure}

\begin{dfn}\label{dfn:Yok}
Let $(\Gamma,col)$ be a framed graph in $S^3$ colored with elements of $I_r$. Let $\Gamma'$ be a desingularization of $\Gamma$. Call $e_1',\dots,e_k'$ the edges of $\Gamma'$ that were added by the desingularization. 
 If $k>0$, then the \emph{Yokota invariant} of $(\Gamma,col)$ is 
 \begin{displaymath}
  Y_r(\Gamma,col):=\sum_{col'\in I_r^k}\left(\prod_{i=1}^k\Delta_{col'(e_i')}\right)|\langle\Gamma',col\cup col'\rangle|^2
 \end{displaymath}
with $col'$ coloring the edges $e_1',\dots,e_k'$. If instead $k=0$ (i.e. $\Gamma=\Gamma'$, i.e. $\Gamma$ is trivalent) then $Y_r(\Gamma,col)=|\langle \Gamma,col\rangle|^2$.
\end{dfn}

As we did with the Kauffman bracket, we extend linearly the Yokota invariant to linear combinations of colors. Notice that in this case, even if $\Gamma$ is trivalent, we may get $Y_r(\Gamma,col)\neq \lvert\langle \Gamma,col\rangle\rvert^2$.

\begin{oss}
 We stress the fact that we are using the unitary normalization for the Kauffman bracket. If we instead used the Kauffman normalization $\langle\cdot\rangle_K$ of \cite{kauflins}, the definition of the Yokota invariant of $\left(\Gamma,col\right)$ would be
 
  \begin{displaymath}
  Y_r(\Gamma,col):=\sum_{col'\in I_r^k}\frac{\prod_{i=1}^k\Delta_{col'(e'_i)}}{\prod_{v \textrm{ vertex of }\Gamma}\Theta(v)}|\langle\Gamma',col\cup col'\rangle_K|^2
 \end{displaymath}
\end{oss}

\begin{prop}\cite{yok}
 The Yokota invariant does not depend on the choice of desingularization.
\end{prop}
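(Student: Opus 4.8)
The plan is to reduce the statement to the invariance of $Y_r$ under a single Whitehead move. By the discussion preceding the Proposition, any two desingularizations of $\Gamma$ differ by a finite sequence of Whitehead moves performed inside the balls around the vertices of valence $>3$ (and the moves at distinct vertices do not interact). Moreover a Whitehead move preserves the number $k=\sum_{v}(\mathrm{val}(v)-3)$ of added edges and is compatible with the framing. Hence it suffices to show the following: if $\Gamma'$ and $\Gamma''$ are trivalent framed graphs differing by a single Whitehead move performed on one added edge $e_1'$ of $\Gamma'$ (producing the added edge $e_1''$ of $\Gamma''$), with all other added edges $e_2',\dots,e_k'$ and all edges of $\Gamma$ unchanged, then the two expressions for $Y_r(\Gamma,col)$ coming from $\Gamma'$ and from $\Gamma''$ agree.

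The key input is the recoupling identity for the Kauffman bracket. Fix a coloring $col$ of $\Gamma$ and a coloring $\mu$ of $e_2',\dots,e_k'$; these determine colors $a,b,c,d\in I_r$ on the four edges meeting the two endpoints of $e_1'$. Then inside any colored graph one has
\[
\langle\Gamma'(col\cup\mu\cup\{e_1'\mapsto x\})\rangle=\sum_{y\in I_r}W^{abcd}_{xy}\,\langle\Gamma''(col\cup\mu\cup\{e_1''\mapsto y\})\rangle ,
\]
where the recoupling coefficient $W^{abcd}_{xy}$ equals the $6j$-symbol $\begin{vmatrix} a & b & x\\ c & d & y\end{vmatrix}$ up to explicit factors built from the $\Delta$'s. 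This identity is a formal consequence of the defining properties of the Kauffman bracket: apply the fusion rule \eqref{eq:fusion} near $e_1'$ and evaluate the resulting theta- and tetrahedron-subgraphs (see \cite[Chapter 7]{kauflins}). The decisive feature of the unitary normalization is that $W^{abcd}$ satisfies the $\Delta$-weighted orthogonality relation
\[
\sum_{x\in I_r}\Delta_x\,W^{abcd}_{xy}\,\overline{W^{abcd}_{xy'}}=\Delta_y\,\delta_{yy'},
\]
which is the orthogonality relation for $6j$-symbols, and whose weights are exactly the ones appearing in the definition of $Y_r$.

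Granting this, the computation is immediate. Writing $P_\mu=\prod_{i=2}^k\Delta_{\mu(e_i')}$ for the (common) weight of the unchanged added edges, the contribution of a fixed $\mu$ to $Y_r(\Gamma,col)$ computed from $\Gamma'$ is $P_\mu\sum_{x\in I_r}\Delta_x\,\bigl|\langle\Gamma'(col\cup\mu\cup\{e_1'\mapsto x\})\rangle\bigr|^2$. Substituting the recoupling identity and expanding the square, the inner sum becomes
\[
\sum_{x\in I_r}\Delta_x\sum_{y,y'\in I_r}W^{abcd}_{xy}\,\overline{W^{abcd}_{xy'}}\;\langle\Gamma''(\dots y\dots)\rangle\,\overline{\langle\Gamma''(\dots y'\dots)\rangle},
\]
which by the weighted orthogonality relation collapses to $\sum_{y\in I_r}\Delta_y\,|\langle\Gamma''(\dots y\dots)\rangle|^2$; this is precisely the contribution of $\mu$ to $Y_r(\Gamma,col)$ computed from $\Gamma''$. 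Summing over $\mu$ gives the desired equality. (Here it is essential that $Y_r$ is built from $|\langle\cdot\rangle|^2$ rather than $\langle\cdot\rangle^2$, so that the cross terms in $y\neq y'$ cancel.)

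I expect the main obstacle to be not the algebra above but the careful setup of the recoupling identity in the correct normalization: one must pin down the exact form of $W^{abcd}$ relative to the symmetric $6j$-symbol $\begin{vmatrix}\cdot\end{vmatrix}$ of \eqref{sixj}, verify the $\Delta$-weighted orthogonality with the sign and branch conventions fixed for $\Delta(a,b,c)$, and check that the move is genuinely \emph{local}, i.e. valid inside an arbitrary ambient diagram and independent of the order in which the remaining fusions are performed. One should also separately treat the degenerate cases in which some of $a,b,c,d,x,y$ vanish, so that Properties \ref{prop:bridge} and \ref{prop:zero} make the two computations agree; these reduce to the same orthogonality relation but deserve a direct check.
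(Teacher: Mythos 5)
The paper itself offers no argument for this statement -- it is quoted from \cite{yok} -- so the relevant comparison is with the standard proof, which is exactly the route you take: reduce to a single Whitehead move on one added edge (the paper already records that any two desingularizations differ by such moves inside the vertex balls, and locality of the recoupling identity inside an ambient diagram is unproblematic), then let a $\Delta$-weighted orthogonality of the recoupling coefficients collapse the cross terms, using crucially that $Y_r$ is built from $\lvert\langle\cdot\rangle\rvert^2$. In outline this is correct and is the argument of the cited source.

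The genuine gap is the displayed relation $\sum_{x}\Delta_x W^{abcd}_{xy}\overline{W^{abcd}_{xy'}}=\Delta_y\delta_{yy'}$, which you assert as ``the orthogonality relation for $6j$-symbols.'' What recoupling (being an involution) gives you, e.g.\ from \cite[Ch.~7]{kauflins}, is the orthogonality \emph{without} complex conjugation. At $q=e^{2\pi i/r}$ the two are not obviously the same: in the unitary normalization $\Delta(a,b,c)=\Theta(a,b,c)^{-1/2}$ is purely imaginary whenever $\Theta(a,b,c)<0$, so $\overline{W^{abcd}_{xy}}=\pm W^{abcd}_{xy}$ with a sign depending on the four vertex triples (hence on $x$ and $y$); moreover the weights $\Delta_x$ are not all positive at this root, so you cannot shortcut via positivity/unitarity of a Hermitian pairing. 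Two repairs are available. Either track the signs $\mathrm{sign}\,\Theta(v)$ explicitly and check they cancel in the weighted sum; or, cleaner and closer to Yokota, rewrite $Y_r$ in the Kauffman--Lins normalization (as in the paper's remark after the definition, with weights $\prod_i\Delta_{col'(e_i')}/\prod_v\Theta(v)$), observe that the KL bracket of a \emph{planar} colored graph is a rational expression in the quantum integers, hence real and fixed by conjugation, and then the identity you need is precisely the classical unconjugated orthogonality
\begin{equation*}
\sum_{x}\frac{\Delta_x}{\Theta(a,b,x)\,\Theta(c,d,x)}\,\mathrm{Tet}\!\begin{bmatrix} a&b&x\\ c&d&y\end{bmatrix}\mathrm{Tet}\!\begin{bmatrix} a&b&x\\ c&d&y'\end{bmatrix}=\delta_{yy'}\,\frac{\Theta(a,d,y)\,\Theta(b,c,y)}{\Delta_y},
\end{equation*}
whose weights match the $\Theta(v)^{-1}$ factors in that formula. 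Since $\Gamma\subseteq S^3$ need not be planar, the brackets can genuinely be non-real; there you should use $\overline{\langle D\rangle}=\langle \bar D\rangle$ for the mirror diagram and perform the Whitehead move simultaneously in $D$ and $\bar D$, after which the same algebraic orthogonality collapses the cross terms. With these points made precise (together with the degenerate color-$0$ cases you already flag, handled by Properties \ref{prop:bridge} and \ref{prop:zero}), your argument becomes a complete proof of the cited result.
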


We can easily extend the Yokota invariant to graphs with $1$-valent and $2$-valent vertices as well via the following formulas.

$$ Y_r\left(\includegraphics[width=2cm]{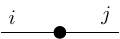}\right)=\frac{\delta_{i,j}}{\Delta_i}Y_r\left(\includegraphics[width=3cm]{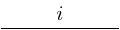}\right);$$

$$ Y_r\left(\vcenter{\hbox{\includegraphics{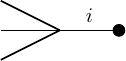}}}\right)=\delta_{i,0}Y_r\left(\vcenter{\hbox{\includegraphics{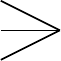}}}\right);$$

We normalize the invariant so that it is equal to $1$ for the graph with a single vertex and no edges.

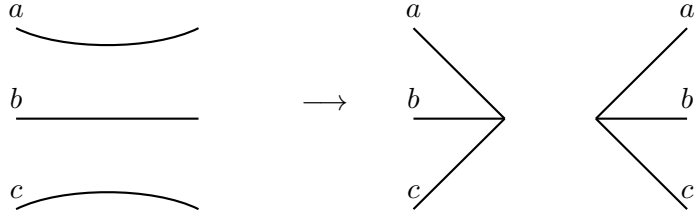
\begin{figure}
 \centering
 \begin{minipage}{.4\textwidth}
 \centering
  \begin{tikzpicture}[scale=0.6]
   \draw[thick] (0,4) node [above]{$a$}--(2,2);
   \draw[thick] (0,2) node [above]{$b$}--(2,2);
   \draw[thick] (0,0) node [above]{$c$}--(2,2);
   \draw[thick] (4,2)--(6,4) node [above]{$a$};
   \draw[thick] (4,2)--(6,2) node [above]{$b$};
   \draw[thick] (4,2)--(6,0) node [above]{$c$};
  \end{tikzpicture}
 \end{minipage}
 $\longrightarrow$ 
\begin{minipage}{.4\textwidth}
\centering
\begin{tikzpicture}[scale=0.6]
 \draw[thick] (0,4) node [above]{$a$} .. controls (1,3.5) and (3,3.5) .. (4,4);
 \draw[thick] (0,2) node [above]{$b$} --(4,2);
 \draw[thick] (0,0) node [above]{$c$}..controls (1,0.5) and (3,0.5)..(4,0);
 \end{tikzpicture}
\end{minipage}
\caption{A vertex sum of two trivalent vertices}\label{fig:vertsum}
\end{figure}

Now we give three important properties of the Yokota invariant, all easy consequences of the definitions.

\begin{prop}\label{prop:yokota}
The following hold:
 \begin{enumerate}
 \item\label{prop:framing} The Yokota invariant does not depend on the framing of $\Gamma$.
  \item\label{prop:whit} If an edge $e$ of $\Gamma$ is colored with the Kirby color $\Omega$, and $\Gamma'$ is obtained from $\Gamma$ via a Whitehead move on the edge $e$ (coloring the edge that replaces $e$ with $\Omega$ and keeping every other color the same) then $Y_r(\Gamma,col)=Y_r(\Gamma',col)$.
  \item\label{prop:vertexsum} If $\Gamma$ is a vertex sum of $\Gamma_1,\Gamma_2$ along trivalent vertices $v_1\in\Gamma_1$ and $v_2\in\Gamma_2$ (see Figure \ref{fig:vertsum}), then $Y_r(\Gamma,col)=Y_r(\Gamma_1,col_1)Y_r(\Gamma_2,col_2)$ where $col_1,col_2$ are the restrictions of $col$ to $\Gamma_1,\Gamma_2$ respectively.
 \end{enumerate}

\end{prop}
\begin{proof}
Part \ref{prop:framing} holds because $\langle\Gamma\rangle$ depends on the framing of $\Gamma$ only up to a factor of $q^a$, thus when taking squared norms this becomes $1$.
 Part \ref{prop:whit} is essentialy the fact that the Yokota invariant is well defined: since both $e$ and the corresponding edge in $\Gamma'$ are colored with $\Omega$, both sides of the equality are equal to the Yokota invariant of the graph obtained by collapsing $e$ to a point. 
 
 Part \ref{prop:vertexsum} follows from the analogous property for the Kauffman bracket; this is obtained via two applications of the fusion rule and one application of the Bridge rule of Definition \ref{def:kauf}.\ref{prop:bridge} (see Figure \ref{fig:vertexsum}).
\end{proof}

\begin{figure}
\centering
 \begin{minipage}{.3\textwidth} \begin{tikzpicture}[scale=0.6]
\centering

\draw[thick] (0,3)--(6,3);
\draw[thick] (0,1.5)--(6,1.5);
\draw[thick] (0,0)--(6,0);
\end{tikzpicture}
\end{minipage}
$\xrightarrow{\hspace*{1.5cm}}$
\begin{minipage}{.3\textwidth}

  \begin{tikzpicture}[scale=0.6]
\centering
\draw[thick] (0,3)--(2,1);
\draw[thick] (0,1.5)--(1,2);
\draw[thick] (0,0)--(2,1);
\draw[thick] (2,1)--(4,1);
\draw[thick] (4,1)--(6,0);
\draw[thick](4,1)--(6,3);
\draw[thick] (6,1.5)--(5,2);

\end{tikzpicture}

\end{minipage}
\caption{Applying the fusion rule to three edges arising from a vertex sum.}\label{fig:vertexsum}
\end{figure}
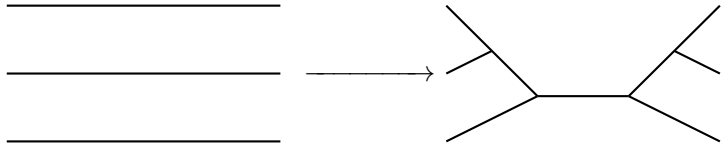

 It is very important that the vertex sum in Proposition \ref{prop:yokota}.\ref{prop:vertexsum} is done between trivalent vertices; the assertion is false in general.
\begin{oss}
 The Kauffman bracket (hence, the Yokota invariant) can also be defined in the much larger setting of framed trivalent graphs in closed oriented $3$-manifolds (see for example \cite{lickorish} and \cite{kauflins}); since we will not need such a generality that carries some more technical details, we will restrict ourselves to the $S^3$ case.
\end{oss}

\section{Volume Conjecture for polyhedra}\label{sec:volconj}

\subsection{The Volume Conjecture for polyhedra}
Costantino first conjectured in \cite{C} that the growth of the $6j$-symbol is given by the volume of a hyperbolic tetrahedron. A Volume Conjecture for trivalent graphs (and their Kauffman bracket invariant) was proposed in \cite{vdv} and later refined in \cite{volconjpoly} to the case of planar trivalent graphs and hyperbolic polyhedra with trivalent vertices. The conjecture of \cite{volconjpoly} evaluates the invariant at the first root of unity $q=e^{\pi i/r}$; the downside of this choice is that they have to consider poles of the Kauffman bracket, instead of its values directly. Recently, Murakami and Kolpakov \cite{murkolp} proposed a Volume Conjecture for polyhedra at the second root of unity $q=e^{2\pi i/r}$, but only stated it for simple polyhedra without hyperideal vertices (see Remark \ref{rmk:simple} and Definition \ref{dfn:poly}); remarkably this conjecture directly involves the value of the Kauffman bracket. Here we propose Conjecture \ref{conjmk} which is an extension of Kolpakov-Murakami's Volume Conjecture to a very general setting, and then propose Conjecture \ref{cng:boundgen} which concerns an upper bound for the Yokota invariant of polyhedral graphs.

\textit{Geometric background.}

 Recall the projective model for hyperbolic space $\mathbb{H}^3\subseteq\mathbb{R}^3\subseteq\mathbb{RP}^3$ where $\mathbb{H}^3$ is the unit ball of $\R^3$ (for the basic definitions see for example \cite{bonbao}). Notice that for convenience we have picked an affine chart $\mathbb{R}^3\subseteq\mathbb{RP}^3$, so that it always make sense to speak of segments between two points, half spaces, etcetera; this choice is inconsequential, up to isometry. It should be mentioned that isometries, in this model, correspond to projective transformations that preserve the unit sphere.
 
The space $\mathbb{RP}^3$ has a duality that comes from the Minkowski scalar product on $\R^{3,1}$; using this we can associate to a point $p$ lying in $\mathbb{R}^3\backslash \overline{\mathbb{H}^3}$ a plane $\Pi_p\subseteq \h$, called the \emph{polar plane} of $p$, such that all lines passing through $\mathbb{H}^3$ and $p$ are orthogonal to $\Pi_p$ (see Figure \ref{fig:dual} for a $2$-dimensional picture).
If $p\in \mathbb{R}^3\backslash\overline{\mathbb{H}^3}$, denote with $H_p\subseteq\mathbb{H}^3$ the half space delimited by the polar plane $\Pi_p$ on the other side of $p$; in other words, $H_p$ contains $0\in\R^3$. If the line from $p$ to $p'$ passes through $\mathbb{H}^3$, then $\Pi_p$ and $\Pi_{p'}$ are disjoint \cite[Lemma 4]{bonbao}. In particular, if the segment from $p$ to $p'$ intersects $\mathbb{H}^3$, then $\Pi_p\subseteq H_{p'}$ and $\Pi_{p'}\subseteq H_{p}$; if however the segment does not intersect $\h$, but the half line from $p$ to $p'$ does, then $H_{p}\subseteq H_{p'}$. If $p$ gets pushed away from $\h$, then $\Pi_p$ gets pushed closer to the origin of $\R^3$. 

\begin{figure}
 \includegraphics[scale=0.9]{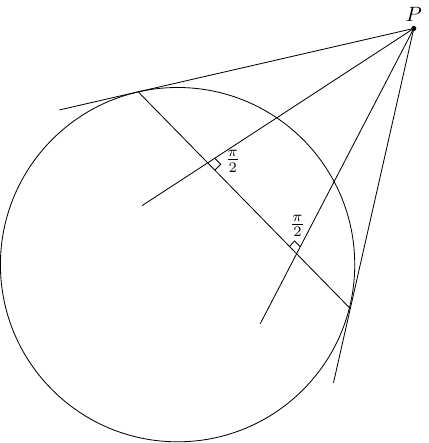}\caption{The dual of a point $P$}\label{fig:dual}
\end{figure}
 
 \begin{dfn}
  A \emph{projective polyhedron} in $\mathbb{RP}^3$ is a convex polyhedron in some affine chart of $\mathbb{RP}^3$. Alternatively, it is the closure of a connected component of the complement of finitely many planes in $\mathbb{RP}^3$ that does not contain any projective line.
 \end{dfn}

\begin{dfn} Following \cite[Definition 4.7]{rivhodg}\label{dfn:poly}
 \begin{itemize}
  \item 
We say that a projective polyhedron $P\subseteq \R^3\subseteq\mathbb{RP}^3$ is a \emph{generalized hyperbolic polyhedron} if each edge of $P$ intersects $\mathbb{H}^3$.
\item A vertex of a generalized hyperbolic polyhedron is \emph{real} if it lies in $\h$, \emph{ideal} if it lies in $\partial \h$ and \emph{hyperideal} otherwise.
\item a generalized hyperbolic polyhedron $P$ is \emph{proper} if for each hyperideal vertex $v$ of $P$ \emph{the interior} of the polar half space $H_v$ contains all the other real vertices of $P$ (see Figure \ref{fig:propbadtrunc}). 
\item 
 We define the \emph{truncation} of a generalized hyperbolic polyhedron $P$ at a hyperideal vertex $v$ to be the intersection of $P$ with $H_v$; similarly the \emph{truncation} of $P$ is the truncation at every hyperideal vertex, that is to say $P\cap\left(\cap_{v \textrm{ hyperideal}}H_v\right)$. We say that the \emph{volume} of $P$ is the volume of its truncation. Notice that the volume of a non-empty generalized hyperbolic polyhedron could be $0$ if the truncation is empty.
 \end{itemize}

\end{dfn}

\begin{figure}
  \centering
    \begin{tikzpicture}
\centering
\fill[fill=gray](2.5,2.5)--(3.5,2)--(2.9,1.97)node[above]{$\Pi_v$};
\draw[thick] (1.5,1.5)--(4,4)node[above]{$v$};
\draw[thick](3.25,1)--(4,4);
\draw[thick] (2.5,1.25)--(4,4);
\end{tikzpicture}
\caption{A proper vertex}\label{fig:propbadtrunc}
\end{figure}

In the remainder of the paper we simply say \emph{proper polyhedra} for proper generalized hyperbolic polyhedra.

When it has positive volume, the truncation of a generalized hyperbolic polyhedron $P$ is itself a polyhedron; some of its faces are the truncation of the faces of $P$, while the others are the intersection of $P$ with some truncating plane; we call such faces \emph{truncation faces}. If an edge of the truncation of $P$ lies in a truncation face we say that the edge is arising from the truncation. 

\begin{oss}
 For proper polyhedra the dihedral angles at the edges arising from the truncation are $\frac{\pi}{2}$.
\end{oss}

\begin{oss}
 If $\Gamma$ can be embedded as the $1$-skeleton of a projective polyhedron, then it is $3$-connected (that is to say, it cannot be disconnected by removing two non-adjacent vertices). Furthermore, any $3$-connected planar graph can be embedded as the $1$-skeleton of a proper polyhedron \cite{steinitz}. 
 If a planar graph is $3$-connected, then it admits a unique embedding in $S^2$ (up to isotopies of $S^2$ and mirror symmetry) \cite[Corollary 3.4]{fle}. Hence when in the following we consider a planar graph, it is always going to be $3$-connected and embedded in $S^2$. In particular, it will make sense to talk about the dual of $\Gamma$, denoted with $\Gamma^*$. The graph $\Gamma^*$ is the $1$-skeleton of the cellular decomposition of $S^2$ dual to that of $\Gamma$.
\end{oss}

\begin{oss}
 It is important not to mix up the $1$-skeleton of a projective polyhedron with the $1$-skeleton of its truncation. In what follows, whenever we refer to
 $1$-skeletons we always refer to those of projective polyhedra (and not their truncation) unless specified.
\end{oss}

We propose the following formulation of the Volume Conjecture for polyhedra, generalizing the previously mentioned versions.

 \begin{cnj}[The Volume Conjecture for polyhedra]\label{conjmk}
Let $P$ be a proper polyhedron with dihedral angles
  $\alpha_1,\dots,\alpha_m$ at the edges $e_1,\dots,e_m$, and $1$-skeleton $\Gamma$. Let $col_r$ be a sequence of $r$-admissible colorings of the edges $e_1,\dots,e_m$ of $\Gamma$ such that 
  \begin{displaymath}
   2\pi\lim_{r\ra+\infty}\frac{col_r(e_i)}{r}=\pi-\alpha_i.
  \end{displaymath}
Then
\begin{displaymath}
 \lim_{r\ra+\infty}\frac{\pi}{r}\log\left\lvert Y_r(\Gamma,col_r)\right\rvert=\mathrm{Vol}(P).
\end{displaymath}

 \end{cnj}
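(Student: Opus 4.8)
The plan is to write $Y_r(\Gamma,col_r)$ as a finite state sum of $6j$-symbols attached to a triangulation of $P$, and then to read off its exponential growth rate by a Laplace (saddle-point) analysis whose governing potential is a sum of hyperbolic tetrahedron volumes. As a sanity check, when $\Gamma$ is already trivalent one has $Y_r=|\langle\Gamma,col_r\rangle|^2$, so for a single tetrahedron the statement reduces to $\frac{2\pi}{r}\log|6j|\to\vol(T)$, i.e. to the $6j$-volume asymptotics conjectured by Costantino \cite{C}; the whole argument is an upgrade of this one-tetrahedron asymptotic to a multivariate sum.

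First I would perform the reduction. Desingularize $\Gamma$ to a trivalent graph $\Gamma'$, so that by definition $Y_r(\Gamma,col_r)=\sum_{col'}\bigl(\prod_i\Delta_{col'(e_i')}\bigr)|\langle\Gamma',col\cup col'\rangle|^2$. Fixing a geometric triangulation $\mathcal T$ of $P$ whose boundary edges are the edges of $\Gamma$, repeated use of the fusion rule (Property \ref{prop:fusion}) together with the bridge and zero-edge rules (Properties \ref{prop:bridge} and \ref{prop:zero}) rewrites each bracket as a contraction of one $6j$-symbol per tetrahedron of $\mathcal T$, with the internal $\Theta$'s and $\Delta$'s as weights. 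This produces $Y_r(\Gamma,col_r)=\sum_{\{n'\}}W_r(\{n'\})\prod_{T\in\mathcal T}|6j_T|^2$, the sum running over colorings $\{n'\}$ of the internal edges of $\mathcal T$. The careful bookkeeping here — compatibility of the dictionary $2\pi n/r\leftrightarrow\pi-\theta$ across shared faces, and the treatment of the high-valence vertices of $\Gamma$ — is exactly what Barrett's Fourier transform organizes, and is what the paper carries out explicitly for the family obtained from the tetrahedron by its two moves.

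Second, I would feed in the asymptotics. Since $|[n]|=|\sin(2\pi n/r)|/|\sin(2\pi/r)|=O(r)$, every $\Delta$- and $\Theta$-weight contributes only $O(\log r/r)$ to $\frac{\pi}{r}\log|\cdot|$ and is negligible, so the only exponentially large input is $\frac{2\pi}{r}\log|6j|\to\vol(T(\theta))$ for dihedral angles $\theta_i$ the limits of $\pi-2\pi n_i/r$. Writing $x_j=2\pi n_j'/r$ for the rescaled internal colors, the logarithm of the summand then converges to $\mathcal V(x):=\sum_{T\in\mathcal T}\vol\bigl(T(\theta(x))\bigr)$, a sum of tetrahedron volumes in which the boundary angles are frozen at $\alpha_i$ and only the internal angles vary. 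Because the number of terms is polynomial in $r$, a Laplace argument gives $\frac{\pi}{r}\log|Y_r(\Gamma,col_r)|\to\max_x\mathcal V(x)$ once the convergence is uniform enough to exchange the limit with the growing sum. The geometric heart is to identify $\max_x\mathcal V(x)=\vol(P)$: by the Schläfli formula $d\mathcal V=-\tfrac12\sum_k\ell_k\,d\theta_k$, a critical point of $\mathcal V$ in the internal variables forces the edge lengths seen from adjacent tetrahedra to agree and the internal dihedral angles to sum correctly, which are precisely the equations assembling the $T$'s into the genuine hyperbolic polyhedron $P$, where $\mathcal V=\vol(P)$; strict concavity of the tetrahedron volume in its dihedral angles would then make this the global maximum.

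The hard part is twofold, and explains why only a special family is established unconditionally. Analytically, the required $6j$-asymptotics must hold uniformly over the entire admissibility range, across real, ideal and hyperideal vertices, and with error terms good enough for the Laplace estimate near the degenerate strata; this is Costantino's conjecture, still open in full generality, and cannot simply be assumed. Geometrically, one must show that the dominant saddle of $\mathcal V$ is the geometric gluing of $P$ rather than a degenerate configuration with collapsed tetrahedra or empty truncation, which requires global concavity and nondegeneracy of the multivariate volume over generalized tetrahedra exactly where truncation and hyperideal vertices interact. Controlling these two issues simultaneously is the crux; in the special family both are sidestepped because the Fourier transform reduces the state sum to a form where the saddle and its value can be located directly.
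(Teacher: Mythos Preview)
The statement you are trying to prove is labelled \texttt{cnj} in the paper: it is Conjecture~\ref{conjmk}, and the paper does \emph{not} contain a proof of it. What the paper actually offers for this conjecture is (i) numerical evidence for some pyramids in the Appendix, and (ii) a verification in Proposition~\ref{prop:r/2} for the single sequence of colors $col_r\equiv\frac{r-2\pm1}{2}$ on the graphs of Theorem~\ref{teovol}, corresponding only to the rectified (all-angles-$0$) polyhedron. There is therefore no ``paper's own proof'' of the general statement to compare your proposal against.

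Your proposal is a coherent heuristic outline of how one expects such a conjecture to be attacked, and you correctly flag its own gaps at the end. But those gaps are genuine obstructions, not technicalities: the uniform asymptotic $\frac{2\pi}{r}\log|6j|\to\vol(T(\theta))$ over the full admissibility polytope (including degenerate and hyperideal strata) is itself an open conjecture, and without it the Laplace step cannot be justified. Likewise the claim that the unique maximizer of $\mathcal V(x)$ is the geometric gluing requires a global concavity statement for the total volume over generalized tetrahedra that is not available in the literature. So what you have written is a strategy, not a proof; and since the paper treats the statement as a conjecture, that is consistent with the state of affairs.

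One correction on how you describe the paper's methods: Barrett's Fourier transform is not used in the paper to ``organize the bookkeeping'' of a state-sum saddle analysis for Conjecture~\ref{conjmk}. It is used for a quite different purpose, namely to relate $Y_r(\Gamma,\cdot)$ to $Y_r(\Gamma^*,\cdot)$ (Proposition~\ref{prop:fourier}), which in turn yields the duality $TV_r(\Gamma)\sim TV_r(\Gamma^*)$ and, in the proof of Proposition~\ref{prop:r/2}, the constant-sign property that lets one pass from blow-ups to triangulations. The special-case argument there is not a saddle-point analysis at all: it is a direct factorization $Y_r(\Gamma,col)=Y_r(\Gamma_1,col_1)Y_r(\Gamma_2,col_2)$ under vertex sum, combined with Theorem~\ref{teo:bound} as a black box.
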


 \begin{oss}\label{rmk:simple}
  In the case where $P$ is a simple polyhedron in $\mathbb{H}^3$ (i.e. a compact polyhedron with only trivalent vertices) this conjecture is the same as the Volume Conjecture of Kolpakov-Murakami \cite{murkolp}.
 \end{oss}

  Conjecture \ref{conjmk} was verified in \cite{chenmur} for tetrahedra with at least one hyperideal vertex; we provide some further supporting numerical evidence for Conjecture \ref{conjmk} for some pyramids in the Appendix \ref{appendix}, and prove it for a large family of examples in Proposition \ref{prop:r/2} and Remark \ref{rmk:volconj} (however, only for a single sequence of colors).
  
  \begin{oss}
   Conjecture \ref{conjmk} would imply that Conjecture \ref{cng:upperbound} is verified when restricted to colors which correspond to hyperbolic polyhedra. 
  \end{oss}

  \subsection{The upper bound conjecture}
  
  In \cite{bound6j} the authors proved an upper bound on the growth of the $6j$-symbol. When stated in terms of the Yokota invariant of the tetrahedral graph $T$, the result is the following.
  
\begin{teo}\label{teo:bound}
For any $r$ and any $r$-admissible coloring $col$ of the graph $T$, we have
$$ \frac{\pi}{r}\log \abs{Y_r(T,col)} \leq v_8+O\left(\frac{\log(r)}{r}\right).$$
  where $v_8\sim 3.66$ is the volume of the regular ideal right-angled octahedron.
  Furthermore, this inequality is sharp, with the upper bound achieved at the $6$-tuple $\frac{r-2\pm1}{2},\dots,\frac{r-2\pm1}{2}$ with the signs chosen so that $\frac{r-2\pm1}{2}$ is even.
\end{teo}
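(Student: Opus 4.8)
The plan is to bound the $6j$-symbol by $r$ times its single largest summand, replace every quantum factorial by its Lobachevsky-function asymptotics, and thereby reduce the theorem to a finite-dimensional optimization whose maximum turns out to be exactly $v_8$; the last step is where I expect the real work to lie.

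First I would use that in \eqref{sixj} the summation index $z$ takes fewer than $r$ values, so that
$$\abs{\begin{vmatrix} n_1 & n_2 & n_3\\ n_4 & n_5 & n_6\end{vmatrix}}\ \le\ r\cdot\prod_{i=1}^4\abs{\Delta(v_i)}\cdot\max_{z}\,\abs{\frac{[z+1]!}{\prod_{i=1}^4[z-T_i]!\,\prod_{j=1}^3[Q_j-z]!}},$$
and $\tfrac{2\pi}{r}\log r=O(\log r/r)$; so it suffices to control $\tfrac{2\pi}{r}\log$ of the product on the right. The analytic input is the standard growth estimate for quantum factorials at $q=e^{2\pi i/r}$: writing $\Lambda(\theta)=-\int_0^\theta\log\abs{2\sin t}\,dt$ for the Lobachevsky function, one has, uniformly for $0\le n\le r-1$,
$$\frac{2\pi}{r}\log\abs{[n]!}\ =\ -\Lambda\!\left(\frac{2\pi n}{r}\right)+\frac{2\pi n}{r}\log\frac{r}{4\pi}+O\!\left(\frac{\log r}{r}\right),$$
proved by comparing the Riemann sum $\sum_{k=1}^n\log\abs{2\sin(2\pi k/r)}$ with $\tfrac{r}{2\pi}\int_0^{2\pi n/r}\log\abs{2\sin t}\,dt$; the only delicate point is the integrable logarithmic singularity of the integrand at $0$ and at $\pi$, near which one estimates the $O(1)$ offending terms by hand, each of size $O(\log r)$.

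Substituting this estimate into the product above, the contributions $\tfrac{2\pi n_k}{r}\log\tfrac{r}{4\pi}$ linear in the colors cancel completely: for the state-sum factor this is the identity $\sum_{i=1}^4 T_i=\sum_{j=1}^3 Q_j=\tfrac12(n_1+\dots+n_6)$, and for the prefactor $\prod_i\Delta(v_i)$ it is that the three ``half-sum'' arguments of each $\Theta(v_i)$ add up to $T_i$. Only a bounded multiple of $\log\tfrac{r}{4\pi}$ survives, and one is left with
$$\frac{2\pi}{r}\log\abs{\begin{vmatrix} n_1 & n_2 & n_3\\ n_4 & n_5 & n_6\end{vmatrix}}\ \le\ \max_{\zeta}\ \mathcal V\!\left(\tfrac{2\pi n_1}{r},\dots,\tfrac{2\pi n_6}{r};\zeta\right)+O\!\left(\frac{\log r}{r}\right),$$
where $\mathcal V$ is an explicit finite signed combination of values of $\Lambda$ at linear forms in its arguments and $\zeta=2\pi z/r$ ranges over $\big[\tfrac{2\pi}{r}\max_i T_i,\ \tfrac{2\pi}{r}\min_j Q_j\big]$. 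The theorem is thus equivalent to the two statements: $\sup\mathcal V=v_8$ over the closure of the $r$-admissibility polytope, and this supremum is approached along the stated diagonal family.

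Proving $\sup\mathcal V=8\Lambda(\pi/4)=v_8$ is the crux, and the step I expect to be the main obstacle. I see two routes. The \emph{geometric} one is to recognize $\max_\zeta\mathcal V$ as the volume of the generalized hyperbolic tetrahedron attached to the $6$-tuple — the classical identification of the exponential growth rate of the quantum $6j$-symbol at $e^{2\pi i/r}$ with a tetrahedral volume (this is \cite{chenmur} when some vertex is hyperideal), with $\partial\mathcal V/\partial\zeta=0$ selecting the dihedral angle of the internal edge and $\partial\mathcal V/\partial n_k=0$ at an interior maximum being the Gram-matrix consistency relations of such a tetrahedron — and then to bound this volume by $\vol(\overline{K_4})$, the volume of the rectification of the tetrahedral graph $K_4$, which is the regular ideal right-angled octahedron of volume $v_8$; this last inequality is precisely the $\Gamma=K_4$ instance of Theorem \ref{maxvol}. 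Degenerate $6$-tuples, for which there is no honest tetrahedron, are handled by a boundary analysis: when a triangle inequality or a bound $n_k\le r-2$ becomes an equality the $6j$-symbol factors through theta symbols or products, where the estimate is easy and in fact strict. The \emph{analytic} route, more self-contained, attacks $\sup\mathcal V$ by calculus: using the tetrahedral $S_4$-symmetry to fix an ordering of the colors, one lists the finitely many critical points of $\mathcal V$, evaluates $\mathcal V$ at each and on each face of the polytope, and checks that the largest value is $v_8$. In either route the entire difficulty lies in ruling out a critical value strictly above $8\Lambda(\pi/4)$.

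Finally, sharpness. Take $n_1=\dots=n_6=m$ with $m=\tfrac{r-2\pm1}{2}$ chosen even; then $T_i=\tfrac32 m$ and $Q_j=2m$ for all $i,j$, so $z$ runs over $[\tfrac32 m,\,2m]$. The sign changes of $[z+1]!$ past $r/2$ exactly compensate the factor $(-1)^z$, so every summand of \eqref{sixj} has one and the same sign, no cancellation occurs, and $\tfrac{2\pi}{r}\log$ of the modulus of the $6j$-symbol equals $\tfrac{2\pi}{r}\log\sum_z\abs{\text{summand}}+O(\log r/r)$, hence is at least $\tfrac{2\pi}{r}\max_z\log\abs{\text{summand}}+O(\log r/r)$. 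Here the prefactor $\prod_i\Delta(v_i)$ contributes $o(1)$ to $\tfrac{2\pi}{r}\log$ (because $\Lambda(3\pi/2)=\Lambda(\pi/2)=0$), and with $z=\tfrac32 m+s$ and $\sigma=2\pi s/r\in[0,\pi/2]$ the summand contributes $4\Lambda(\sigma)+4\Lambda(\tfrac\pi2-\sigma)+o(1)$ after using the periodicity and oddness of $\Lambda$; this is maximal at $\sigma=\pi/4$, with value $8\Lambda(\pi/4)=v_8$. Thus $\tfrac{2\pi}{r}\log$ of the $6j$-symbol tends to $v_8$ along this family, so the inequality cannot be improved.
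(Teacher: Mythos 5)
First, a remark on the comparison you were asked to be judged against: the paper does not prove this theorem at all --- it is quoted verbatim from \cite{bound6j}, so your attempt has to be measured against the proof in that reference. Within your proposal, the reduction and the sharpness half are essentially sound. The quantum-factorial estimate $\frac{2\pi}{r}\log|[n]!|=-\Lambda(2\pi n/r)+\frac{2\pi n}{r}\log\frac{r}{4\pi}+O(\log r/r)$ is correct and uniform, and the $\log\frac{r}{4\pi}$ contributions do cancel up to a bounded multiple (small slip: $\sum_i T_i=\sum_j Q_j=n_1+\cdots+n_6$, not half that sum, but the cancellation is unaffected). For $n_1=\cdots=n_6=\frac{r-2\pm1}{2}$ your sign analysis is right: the arguments $z-T_i$ and $Q_j-z$ stay below $r/2$, so the sign of each summand is $(-1)^z$ times the sign of $[z+1]!$, which is independent of $z$; hence no cancellation, and the term with $\zeta\approx 3\pi/2+\pi/4$ gives growth rate $4\Lambda(\pi/4)+4\Lambda(\pi/4)=v_8$, while the $\Theta$'s contribute $o(1)$ since $\Lambda(\pi/2)=\Lambda(3\pi/2)=0$. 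That establishes the lower bound along the stated family.

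The genuine gap is the upper bound, i.e.\ exactly the step you yourself flag as the crux: you never prove $\sup\mathcal V=8\Lambda(\pi/4)$ over the closure of the admissibility polytope; you only name two possible strategies. The analytic route is a statement of intent (a seven-variable critical-point analysis with nothing computed, and no argument excluding a larger critical value). The geometric route does not work as described: the identification of $\max_\zeta\mathcal V$ with the volume of a generalized hyperbolic tetrahedron is available only for limiting angle $6$-tuples that actually bound such a tetrahedron --- and at $q=e^{2\pi i/r}$ that identification is itself the hard content of \cite{chenmur}, proved there only when at least one vertex is hyperideal --- whereas the set of admissible $6$-tuples with no associated geometric tetrahedron is a full-dimensional region of the polytope, not a collection of boundary faces where a triangle inequality degenerates; so your proposed ``boundary analysis'' cannot dispose of it, and invoking Theorem \ref{maxvol} for $K_4$ only helps after that identification is in place. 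Proving the bound $\mathcal V\le v_8$ for all admissible arguments is precisely the technical heart of the proof in \cite{bound6j}, where it is carried out by a direct analysis of the relevant combination of Lobachevsky functions; without it, your argument establishes only the sharpness direction, not the inequality asserted in the theorem.
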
 

It is natural to ask if a similar upper bound holds for other graphs. The reason the quantity $v_8$ is involved in the statement of Theorem \ref{teo:bound} is that it is the upper bound of the volume of all proper tetrahedra. In \cite{maxvol} the author proved that, given a polyhedral graph $\Gamma$, the upper bound of the volume of all proper polyhedra with $1$-skeleton $\Gamma$ is equal to the volume of the rectification of $\Gamma$, denoted with $\overline{\Gamma}$ (see Remark \ref{rmk:rect}). In light of this, we reword Conjecture \ref{cng:upperbound} as follows:

 \begin{cnj}\label{cng:boundgen}
  If $\Gamma$ is a polyhedral graph and $col$ is any $r$-admissible coloring of its edges, then
  \begin{displaymath}
   \frac{\pi}{r}\log\left\lvert Y_r(\Gamma,col)\right\rvert\leq \vol(\overline{\Gamma})+O\left(\frac{\log(r)}{r}\right)
  \end{displaymath}
 Moreover, the inequality is sharp, with equality attained by the sequence of colorings
giving the color $\frac{r-2\pm1}{2}$ to each edge (the sign is chosen so that the colors are even).
\end{cnj}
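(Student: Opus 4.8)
The plan is to induct on the number of vertices of $\Gamma$, using the Fourier transform (Proposition \ref{prop:fourier}) to trade $\Gamma$ for its dual when convenient, and reducing the general $3$-connected graph to the base case of the tetrahedron, where the estimate is exactly Theorem \ref{teo:bound}. Concretely, I would first establish the inequality for trivalent $\Gamma$: write a planar diagram, apply the fusion rule at each crossing, and use the bridge/zero rules (Properties \ref{prop:bridge}, \ref{prop:zero}) to express $\langle\Gamma,col\rangle$ as a bounded (polynomial in $r$) sum of products of $6j$-symbols, $\Theta$'s and $\Delta$'s indexed by a triangulation-type decomposition of the complementary handlebody; taking $\log$ and dividing by $r$ turns the product into a sum, so the exponential growth rate is at most a sum of octahedral volumes $v_8$ — and the point of \cite{maxvol} is precisely that this sum equals $\vol(\overline{\Gamma})$ when the decomposition is chosen compatibly with the rectification. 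For non-trivalent $\Gamma$, use the definition of $Y_r$ via a desingularization $\Gamma'$: each $Y_r(\Gamma,col)$ is a polynomially-large sum of $|\langle\Gamma',col\cup col'\rangle|^2$, and $\overline{\Gamma'}$ has the same rectified volume as $\overline{\Gamma}$ (blowing up a vertex does not change the supremum of volumes, by \cite[Corollary 4.6]{maxvol} and the truncation discussion), so the trivalent case applies.

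The second half — sharpness — I would handle by exhibiting the claimed sequence $col_r(e)=\frac{r-2\pm1}{2}$ and computing a lower bound on $|Y_r(\Gamma,col_r)|$ from below directly. The idea: for this balanced coloring every admissible triple at every vertex is the "largest" one, and by Theorem \ref{teo:bound} each $6j$-symbol appearing in the decomposition is of size $e^{\frac{r}{2\pi}v_8 + o(r)}$; since there are no cancellations to worry about at the level of the leading exponential (one shows the dominant term in the state sum does not cancel, e.g. by a positivity or a stationary-phase argument as in \cite{chenmur}), the product realizes the full rate $\sum v_8 = \vol(\overline{\Gamma})$. Alternatively, and more cleanly, use the Fourier transform: $TV_r(\overline{\Gamma})$-type quantities for the rectification are built from right-angled ideal polyhedra glued from octahedra, whose Turaev–Viro growth is known, and Proposition \ref{prop:dual} together with Corollary \ref{cor:conjdual} propagates the estimate.

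The main obstacle is the sharpness/lower bound, specifically controlling cancellation in the state sum $\sum_z (-1)^z[z+1]!/\cdots$ and in the outer sum over $col'$: an upper bound on $|Y_r|$ follows from the triangle inequality term-by-term, but a matching lower bound requires showing the leading-order terms do not destructively interfere. I expect this to need either an explicit asymptotic expansion of the relevant $6j$-symbols at the balanced coloring (à la Costantino's and Chen–Murakami's saddle-point analysis) or a clever reorganization — for instance, recognizing $Y_r(\Gamma,col_r)$ as (a piece of) $TV_r$ of an honest hyperbolic manifold glued from regular ideal octahedra, where the Chen–Yang conjecture is already known for fundamental shadow links \cite{bound6j}. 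The upper bound, by contrast, is essentially bookkeeping on top of Theorem \ref{teo:bound} and the combinatorics of \cite{maxvol}.
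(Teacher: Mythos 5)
Your plan claims the general statement, but the central step of the upper bound does not work, and this is exactly why the statement is a conjecture in the paper rather than a theorem: the paper proves it only for the family of Theorem \ref{teovol}. If you compute $\langle\Gamma',col\cup col'\rangle$ by fusion rules, you get a state sum of products of $6j$-symbols, and bounding each factor by $e^{\frac{r}{2\pi}v_8+O(\log r)}$ via Theorem \ref{teo:bound} gives an upper bound of the form $N\cdot v_8$, where $N$ is the number of $6j$-factors in your chosen decomposition. Nothing in \cite{maxvol} identifies $N\cdot v_8$ with $\vol(\overline{\Gamma})$: the theorem there says $\sup_P\vol(P)=\vol(\overline{\Gamma})$, and it supplies no octahedral decomposition of the rectification matched to a fusion-tree state sum. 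Indeed $\vol(\overline{\Gamma})$ is in general not an integer multiple of $v_8$ (the rectified square pyramid in the Appendix has volume $\approx 6.02$, strictly between $v_8$ and $2v_8$), whereas your bound always is, so term-by-term estimation overshoots the conjectured bound for generic $\Gamma$. The paper's Theorem \ref{teovol} succeeds precisely because for graphs built from the tetrahedron by blow-ups and triangulations one has $\vol(\overline{\Gamma})=(g+1)v_8$ \emph{and} $Y_r$ literally factorizes into $g+1$ tetrahedral brackets, via the vertex-sum multiplicativity of Proposition \ref{prop:yokota} and Fourier duality (Proposition \ref{prop:fourier}), so the count of $6j$-factors equals the number of octahedra in the rectification. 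Your reduction of non-trivalent $\Gamma$ to a desingularization is also unsupported: replacing a high-valence vertex by a trivalent tree changes the graph and, in general, its rectified volume; Corollary 4.6 of \cite{maxvol} concerns planar duality, not desingularization.

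The sharpness half has the same problem in mirror image: you correctly identify that the difficulty is cancellation, but the proposed fixes (saddle-point asymptotics of general $6j$-symbols at the balanced coloring, or an appeal to Proposition \ref{prop:dual} and the rectification) are not arguments, and no such analysis exists in the paper in this generality. The mechanism the paper actually uses (Proposition \ref{prop:r/2}), available only after reducing to the special family, is elementary and different: with $col\equiv\frac{r-2\pm1}{2}$ the Fourier kernel satisfies $H\bigl(\frac{r-2\pm1}{2},j\bigr)=-\frac{\sin(\pm\frac{\pi}{r}(j+1))}{\sin(2\pi/r)}$, which has constant sign in $j$, while for the trivalent dual every $Y_r(\Gamma^*,col')=\lvert\langle\Gamma^*,col'\rangle\rvert^2\geq0$; hence the Fourier sum has no cancellation and grows like its largest term, which is controlled by induction (multiplicativity under vertex sums plus the sharp case of Theorem \ref{teo:bound}). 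As written, your sharpness step restates the obstacle rather than overcoming it, and your upper bound would prove a weaker inequality than the one conjectured.
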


\begin{oss}\label{rmk:rect}
 The rectification of $\Gamma$ is defined as the unique projective polyhedron with $1$-skeleton $\Gamma$ and with every edge tangent to $\partial\h$ (see Figure \ref{fig:retttet}). While $\overline{\Gamma}$ is not a proper (or even generalized hyperbolic) polyhedron, we can still speak of its truncation and its volume; for more details see \cite[Section 3.4]{maxvol}.
\end{oss}

\begin{oss}
 It would be natural to ask whether a similar upper bound would work for non polyhedral graphs; however in this case it is unclear what would be the geometric object to replace $\overline{\Gamma}$.
\end{oss}

 \begin{figure}
  \centering
  \begin{minipage}{.4\textwidth}
   \centering
  \begin{tikzpicture}[rotate=20]
\centering
\draw (0,0) circle[radius=2cm];
\draw[thick] (2.2,2.2)--(1.84,-1.84)--(-2.2,-2.2)--(-1.84,1.84)--(2.2,2.2);
\draw[thick] (2.2,2.2)--(-2.2,-2.2);
\draw[thick,dashed](1.84,-1.84)--(-1.84,1.84);

\end{tikzpicture}
  \end{minipage}
  \hspace{1cm}
\begin{minipage}{.4\textwidth}
 \centering
   \begin{tikzpicture}[rotate=20]
\centering
\fill[color=gray](1.98,-0.15)--(-0.15,1.98)--(-0.1,-0.1)--(1.98,-0.15);
\fill[color=gray](-1.98,0.15)--(0.15,-1.98)--(-0.1,-0.1)--(-1.98,0.15);
\draw (0,0) circle[radius=2cm];
\draw[thick] (2.2,2.2)--(1.84,-1.84)--(-2.2,-2.2)--(-1.84,1.84)--(2.2,2.2);
\draw[thick] (2.2,2.2)--(-2.2,-2.2);
\draw[thick,dashed](1.84,-1.84)--(-1.84,1.84);
\draw[thin](1.98,-0.15)--(-0.15,1.98)--(-0.1,-0.1)--(1.98,-0.15);
\draw[thin](-1.98,0.15)--(0.15,-1.98)--(-0.1,-0.1)--(-1.98,0.15);
\draw[thin](-1.98,0.15)--(-0.15,1.98);
\draw[thin](1.98,-0.15)--(0.15,-1.98);
\draw[thin,dashed] (1.98,-0.15)--(0,0)--(0.15,-1.98);
\draw[thin,dashed] (-1.98,0.15)--(0,0)--(-0.15,1.98);
\end{tikzpicture}
\end{minipage}
\caption{The rectification of a tetrahedron (left) and its truncation (right), the ideal right-angled octahedron. The gray faces arise from the truncation of the top and bottom vertices.}\label{fig:retttet}
 \end{figure}

\begin{teo:maxvolconj}
 Conjecture \ref{cng:boundgen} is verified for any planar graph obtained from the tetrahedron by applying any sequence of the following two moves:
 \begin{itemize}
  \item blowing up a trivalent vertex (see Figure \ref{fig:trunc}) or
  \item triangulating a triangular face (see Figure \ref{fig:triang}).
 \end{itemize} 
\end{teo:maxvolconj}

This theorem will be proven in Section \ref{sec:fourier}.
 
\section{The Fourier Transform}\label{sec:fourier}
In this section we prove Theorem \ref{teovol}. The first main tool used is Theorem \ref{teo:bound}.

The second main tool used to prove Theorem \ref{teovol} is the Fourier Transform introduced in \cite{bar} by Barrett. We describe it here in a slightly different context and notation.

\begin{figure}
\centering
\begin{tikzpicture}
\begin{knot}[
    clip width=3,
    flip crossing={2},
    ]
    \strand [ultra thick] (0,0) circle (1.0cm);
    \strand [ultra thick] (1,0) circle (1.0cm);
\end{knot}
\end{tikzpicture} 
\caption{The $0$-framed Hopf link}\label{fig:Hopf}
\end{figure}
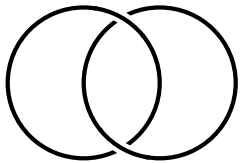

Let $H\subseteq S^3$ be the $0$-framed Hopf link as in Figure \ref{fig:Hopf}. For $i,j\in I_r$ we denote with $H(i,j)\in\C$ the value of the Kauffman bracket of the Hopf link colored with $i,j$; applying the relation of \cite[Figure 22]{lickorish} and an easy induction on $j$ shows that

\begin{gather*}
 H(i,j)=(-1)^{i+j}[(i+1)(j+1)]=(-1)^{i+j}\frac{\sin\left(\frac{2\pi}{r}\left(i+1\right)\left(j+1\right)\right)}{\sin\left(\frac{2\pi}{r}\right)}.
\end{gather*}

Furthermore denote with 

$$N:=\frac{r}{4\sin^2\left(\frac{2\pi}{r}\right)}=\langle U,\Omega\rangle=\sum_{i\in I_r} \Delta_i^2$$

where $U$ is the $0$-framed unknot in $S^3$ colored with the color $\Omega:=\sum\limits_{i\in I_r} \Delta_i i$ (see \cite[Page 185]{lickorish}).

\begin{oss}
 Once again we remark that we are using the $SO(3)$ version of the invariants evaluated at $q=e^{2\pi i/r}$. However, the Fourier transform and its properties hold with any choice of primitive $2r$-th root of unity, or any choice of primitive $4r$-th root of unity for the $SU(2)$ case; the proofs work verbatim in every other case.
\end{oss}

\begin{dfn}
 The \emph{Fourier transform} of $Y_r(\Gamma,col)$ is the invariant of the colored graph $(\Gamma,col')$ given by the formula
  \begin{displaymath}
\mathcal{F}_r(\Gamma,col')=\sum_{col \textrm{ coloring of }\Gamma}Y_r(\Gamma,col)H(col,col')
 \end{displaymath}
 where $$H(col,col'):=\prod_{e \textrm{ edge of }\Gamma}H(col(e),col'(e^*)).$$
\end{dfn}

The following proposition was first noticed by Barrett in \cite[Section 5]{bar}; a concise proof was later given in \cite[Theorem 1]{bar+}. For the sake of completeness, we include a detailed proof of this result.

\begin{prop}\label{prop:fourier}
 If $\Gamma$ is a planar framed graph, $\Gamma^*$ is its planar dual, and $col'$ is a coloring of the edges of $\Gamma^*$, then
 \begin{displaymath}
  Y_r(\Gamma^*,col')=N^{-g}\sum_{col \textrm{ coloring of }\Gamma}Y_r(\Gamma,col)H(col,col')=N^{-g} \mathcal{F}_r(\Gamma,col')
 \end{displaymath}
where $g$ is the genus of a regular neighborhood of $\Gamma$.
\end{prop}
\begin{proof}

The proof is entirely diagrammatic; when we display an equality between (linear combinations of) diagrams, we mean that they have the same Kauffman bracket. Throughout the proof we will liberally add $\Omega$-colored, $0$-framed unknots that are unlinked from anything else; this will generate an ambiguity of a power of $N$ that we will account for at the end.

\emph{Step 1:} Calculate $Y_r(\Gamma,col)$ as the Kauffman bracket of a certain framed colored link $L(\Gamma,col)$.

 The colored link $L(\Gamma,col)$ is obtained from $(\Gamma,col)$ as in Figure \ref{fig:chainmail}. Every vertex is replaced by a circle colored with $\Omega$, and every edge is replaced by a circle colored with the same color as the edge, wrapping around once each of the two circles corresponding to its vertices in a minimally twisted way (i.e. the circle has two consecutive overcrossings and two consecutive undercrossings). Notice that the link itself only depends on $\Gamma$; only its coloring depends on $col$. Furthermore we can define the framing to be the blackboard framing of the diagram we just constructed.
 
 \begin{figure}
 \centering
  \begin{minipage}{.4\textwidth}
     \begin{tikzpicture}[scale=0.4]
\centering
\draw[thick] (0,0) -- (10,0);
\draw[thick] (5,5)--(5,-5);
\draw[fill] (5,0) circle[radius=0.1cm];
\end{tikzpicture}
  \end{minipage}
$\xrightarrow{\hspace*{0.8cm}}$
  \begin{minipage}{.4\textwidth}
  \begin{tikzpicture}[scale=0.5]
\centering
\begin{knot}[
clip width=10,
]
\strand[thick] (0,0) .. controls (0.5,5) and (1.5,5) .. (2,0);
\strand[thick] (-4,4) .. controls(1,4.5) and (1,5.5) .. (-4,6);
\strand[thick] (6,4) .. controls(1,4.5) and (1,5.5) .. (6,6);
\strand[thick] (0,10) .. controls (0.5,5) and (1.5,5) .. (2,10);
\strand[thick] (1,5) circle[radius=2cm];
\flipcrossings{1,5,8,3}

\end{knot}
\node at (3,7) {\Large$\Omega$};
\end{tikzpicture}

  \end{minipage}
  \caption{Obtaining $L(\Gamma,col)$ using the \emph{Chainmail Rule.} Each circle has the same color as its corresponding edge, and it has two consecutive overcrossings and two consecutive undercrossings.}\label{fig:chainmail}
 \end{figure}

 The fact that $\langle L(\Gamma,col)\rangle=Y_r(\Gamma,col)$ can be shown by using the definition of $Y_r$ after applying the following identity to $L$:
 
 \begin{equation}\label{eqn:vertsum}
  \vcenter{\hbox{\includegraphics{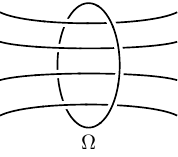}}}=\sum_{i\in I_r}\Delta_i \vcenter{\hbox{\includegraphics[height=3cm]{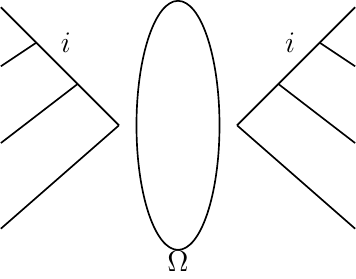}}}
 \end{equation}

 This holds for any number of strands; it is obtained by repeated application of the fusion rule followed by the well known fact (see \cite[Lemma 6]{lickorish}) that if a diagram contains the portion depicted in Figure \ref{fig:mapofoutside} it is equal to $0$ unless $i=0$.
 
 \begin{figure}
  \centering
  \begin{tikzpicture}
\centering
\begin{knot}[
clip width=5,
]
\strand[thick] (0,1) -- (3,1);
\strand[thick] (1.5,1.1) ellipse (15pt and 30pt);
\node at (1.5,-0.2) {$\Omega$};
\node at (0,1)[above right] {$i$};
\flipcrossings{2};
\end{knot}

\end{tikzpicture}
\caption{}\label{fig:mapofoutside}
 \end{figure}
 
 When passing from $\Gamma$ to $L(\Gamma,col)$ we still speak of edges and vertices of $L(\Gamma,col)$: we mean the circles corresponding to edges and vertices of $\Gamma$ respectively. Slightly more improperly we speak of faces of $L(\Gamma,col)$, by which we mean the portions of the plane delimited by edges of $\Gamma$. To do this, until the start of Step 3 we fix the diagram of $L(\Gamma,col)$ that we just created.
 
\emph{Step 2:} for a given coloring $col'$ of $\Gamma$, calculate $\mathcal{F}_r(\Gamma,col')$ as the Kauffman bracket of a link $\hat{L}(\Gamma,col')$.  

The Fourier transform is given by the formula

\begin{displaymath}
\mathcal{F}_r(\Gamma,col')=\sum_{col \textrm{ coloring of }\Gamma}\langle L(\Gamma,col)\rangle H(col,col').
 \end{displaymath} 
 
We wish to express this formula as the bracket of a single colored link; to do so we use the following relationship (which can be proven via a particular case of the vertex sum formula from Proposition \ref{prop:vertexsum} after we introduce extra edges colored with $0$):
 
 \begin{equation}
  \sum_{i\in I_r} H(i,j)\vcenter{\hbox{\includegraphics{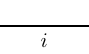}}}=\vcenter{\hbox{\includegraphics{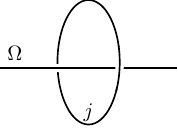}}}.
 \end{equation}

Therefore $\mathcal{F}_r(\Gamma,col')=\langle \hat{L}(\Gamma,col')\rangle$, where $\hat{L}(\Gamma,col')$ is the colored link obtained from $L(\Gamma,col)$ by changing the color of each edge $e$ of $L(\Gamma,col)$ to $\Omega$ and by adding a meridional circle around it colored with $col'(e)$. We call the meridional circles added via this process the \emph{transverse} circles; they will correspond to edges of $\Gamma^*$. Notice that this step only added circles, and did not otherwise change the link diagram we created in Step 1 (not even via planar isotopy).
 
 \begin{figure}
 \begin{minipage}{.49\textwidth}
  \includegraphics[scale=0.12]{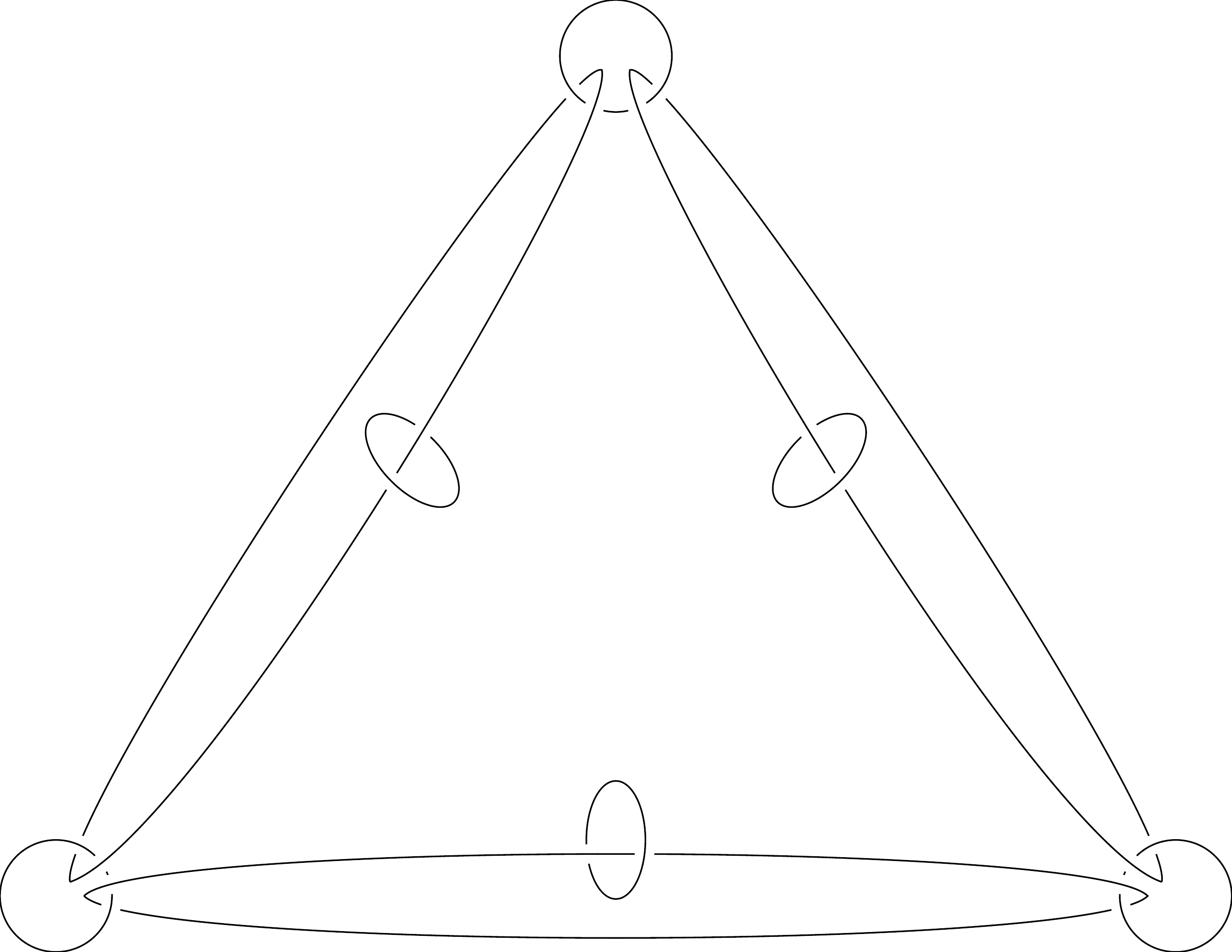}
 \end{minipage}
 \begin{minipage}{.49\textwidth}
  \includegraphics[scale=0.12]{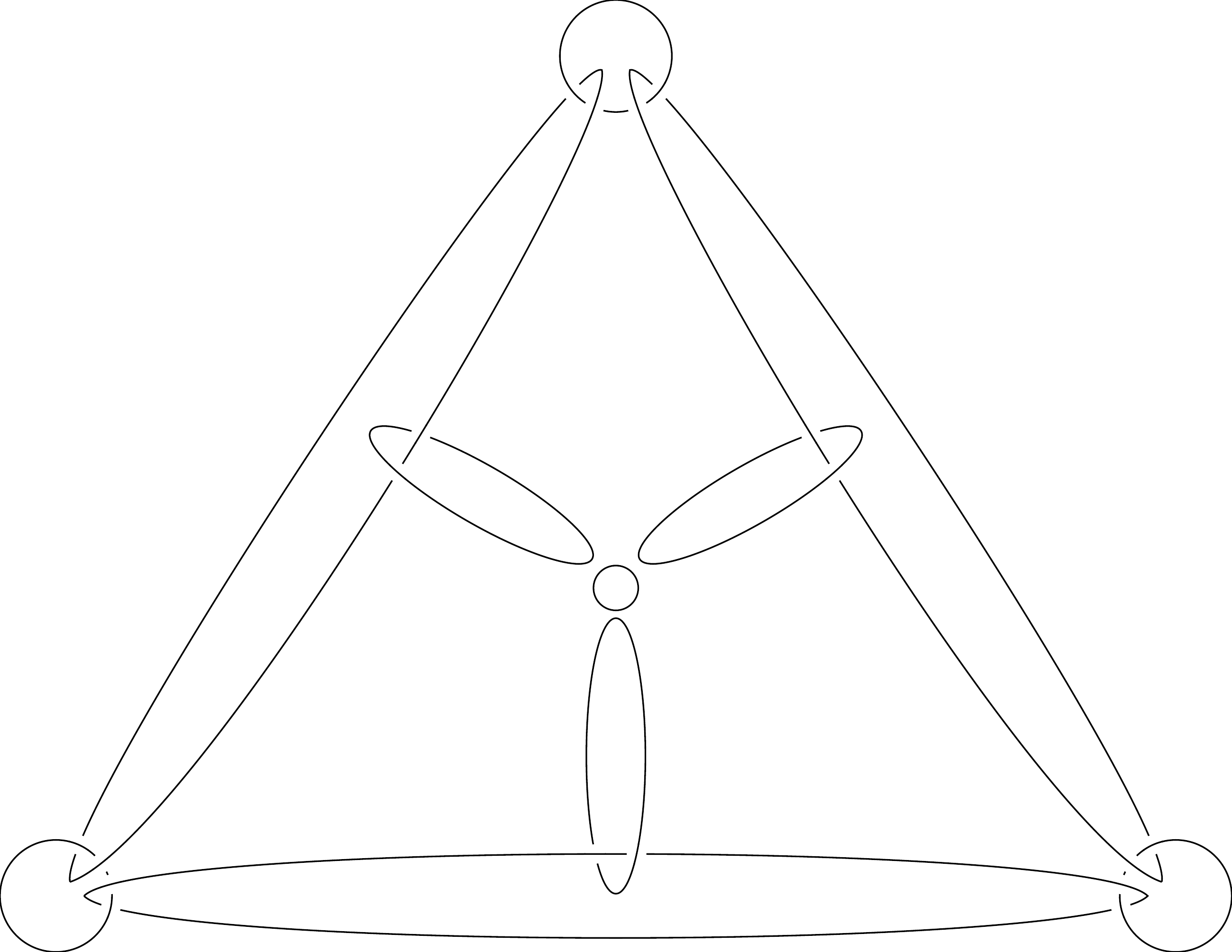}
 \end{minipage}
\caption{Stretching edges towards the center and adding an extra component.}\label{fig:stretching}
\end{figure}

\emph{Step 3:} Manipulate $\hat{L}(\Gamma,col')$ to obtain $L(\Gamma^*,col')$.

 Take a face $F$ of $\hat{L}(\Gamma,col')$ and stretch the circles transverse to its edges so that they are close to the center of $F$ and add an unknot $U$ colored with $\Omega$ at the center of $F$  (see Figure \ref{fig:stretching}). Handleslide this new unknotted component along all the edges of $F$ (see Figure \ref{fig:hs}); the result is that $U$ gets linked to each transversal circle and remains unlinked from any edge or vertex of $\Gamma$ as in Figure \ref{fig:unlinking}. Because the edges are colored with $\Omega$ this procedure does not change the Kauffman bracket (see for example \cite[Corollary, page 181]{lickorish}). The circle $U$ will correspond to a vertex in $\Gamma^*$. Repeat this procedure for every face of $\hat{L}(\Gamma,col')$ (notice also that the procedure we just carried out only changes the link diagram in the portion of the plane corresponding to $F$)

\begin{figure}
\centering
  \begin{tikzpicture}
 \draw [dashed, thick] (0,0) -- (1,0);
 \draw [dashed, thick] (0,2) -- (1,2);
 \draw [thick] (1,0) arc (-90:90:1);
 
 \draw [thick] (4,0) arc (270:90:1);
 \draw [dashed, thick] (4,0) -- (5,0);
 \draw [dashed,thick] (4,2) -- (5,2);
 \draw [->] (2.5,0) -- (2.5,-1);
  \draw [dashed, thick] (0,-3) -- (1,-3);
 \draw [dashed, thick] (0,-1) -- (1,-1);
 \draw [thick] (1,-3) arc (-90:90:1);
 \draw [dashed, thick] (4,-3) -- (5,-3);
 \draw [dashed,thick] (4,-1) -- (5,-1);
 \draw [thick] (4,-1) to [out=180,in=90](3.2,-1.8);
 \draw [thick] (4,-3) to [out=180,in=-90](3.2,-2.2);
 \draw [thick] (3.2,-1.8) -- (2.15,-1.8); 
 \draw [thick] (3.2,-2.2) -- (2.15,-2.2);
 
  \draw [dashed, thick] (0,-3.2) -- (1,-3.2);
 \draw [dashed, thick] (0,-0.8) -- (1,-0.8);
 \draw [thick] (1,-3.2) to [out=0,in=-90](2.15,-2.2);
 \draw [thick] (1,-0.8) to [out=0, in=90] (2.15,-1.8);
 \end{tikzpicture}
 \caption{Handleslide between two different components of $L(\Gamma,col)$}\label{fig:hs}
\end{figure}
  
 Now apply relation \ref{eqn:vertsum} to each circle corresponding to a vertex of $\Gamma$ and each circle corresponding to a vertex of $\Gamma^*$. The result (see Figure \ref{fig:linking}) is going to be four connected graphs (and several unlinked unknots that for now we ignore), which lie in parallel planes and are therefore unlinked from each other. Two of these give $Y_r(\Gamma^*,col')$ and two of these give $Y_r(\Gamma,\Omega)$ (where we still denote with $\Omega$ the coloring of $\Gamma$ with color $\Omega$ on each edge).
  
  \begin{figure}\centering
  \includegraphics[scale=0.12]{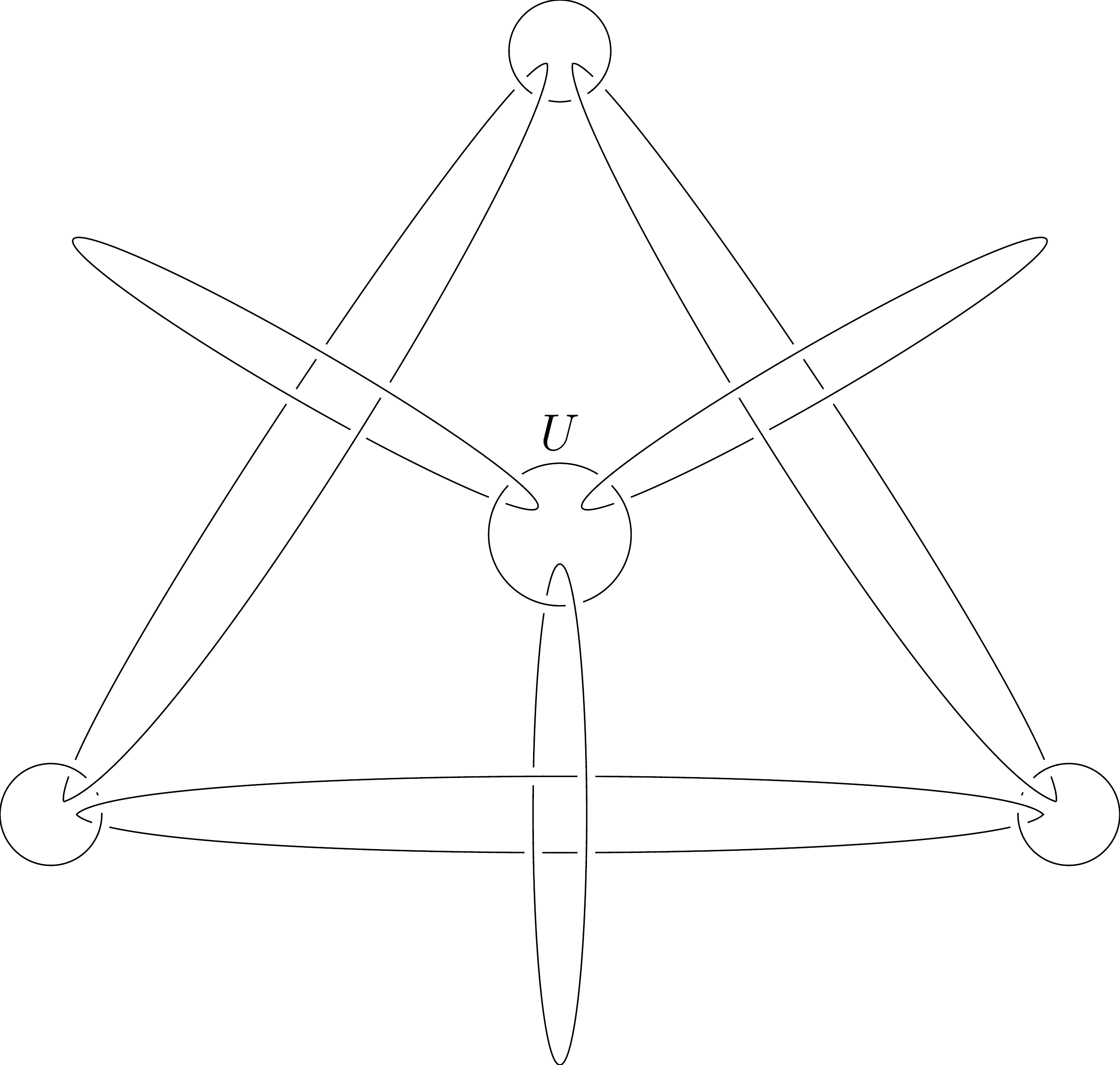}
  \caption{The central component $U$ gets linked by handleslides.}\label{fig:unlinking}
 \end{figure}

  \begin{figure}\centering
  \includegraphics[scale=0.12]{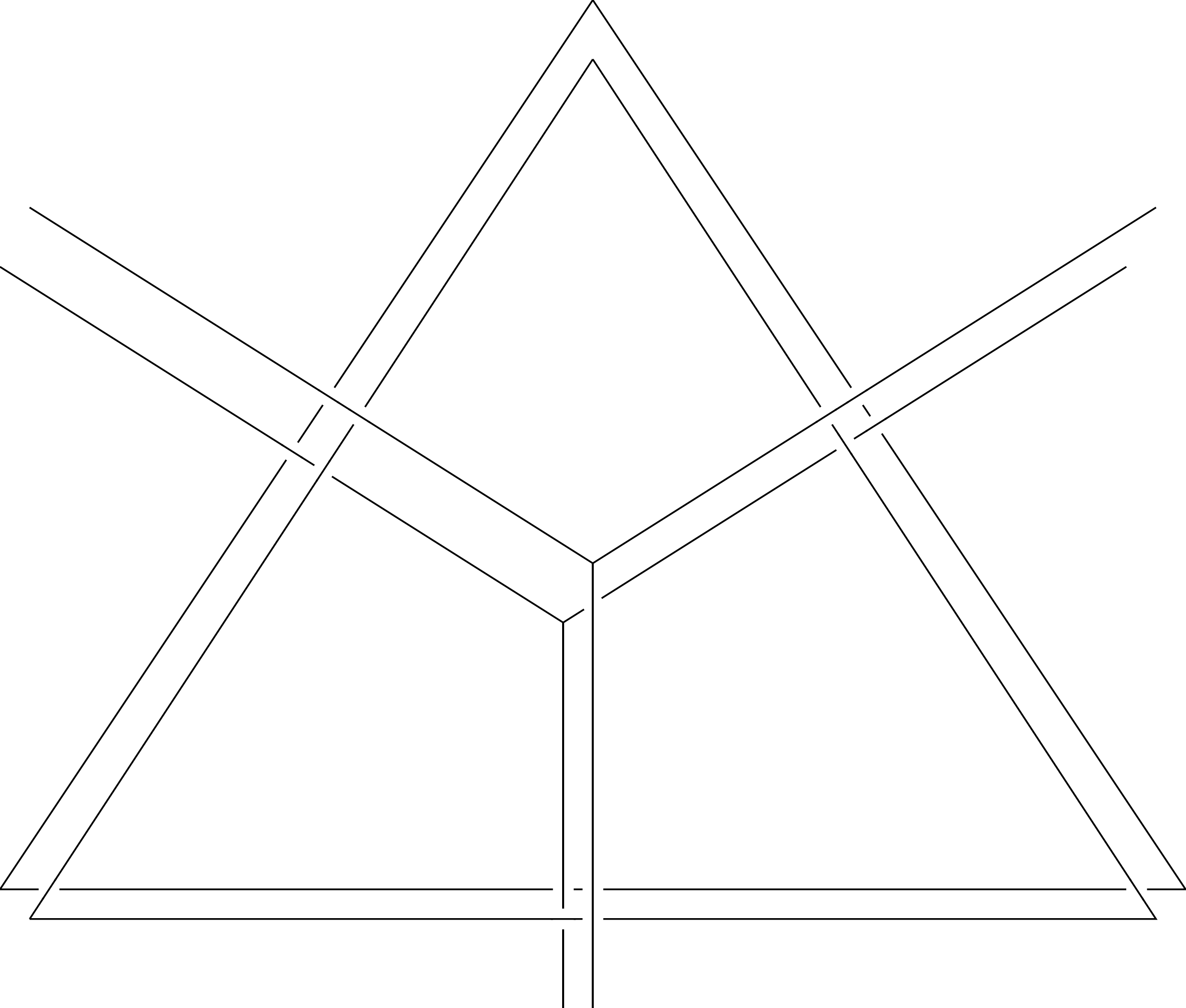}
  \caption{After applying Equation \ref{eqn:vertsum}, we get four unlinked graphs.}\label{fig:linking}
 \end{figure}
\emph{Step 4:} prove that
  $$Y_r(\Gamma,\Omega)=N^g.$$

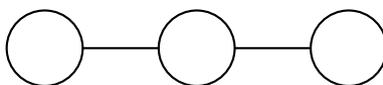
\begin{figure}
\centering
   \begin{tikzpicture}
\centering
\draw[thick](0,0) circle[radius=0.5cm];
\draw[thick](0.5,0)--(1.5,0);
\draw[thick](2,0) circle[radius=0.5cm];
\draw[thick](2.5,0)--(3.5,0);
\draw[thick] (4,0) circle[radius=0.5cm];
\end{tikzpicture}
\caption{Bicycle with $3$ wheels}\label{fig:bycicle}
\end{figure}

To do this, recall that the Yokota invariant does not change when performing a Whitehead move on an edge colored with $\Omega$ (Proposition \ref{prop:yokota}.\ref{prop:whit}). Further recall that a sequence of Whitehead moves can change any trivalent graph into any other trivalent graph with the same number of vertices; this is because:

\begin{itemize}
 \item two trivalent graphs with the same number of vertices also have the same number of faces;
 \item their duals are triangulations with the same number of vertices;
 \item their duals can be changed into one another via ``edge flips'' (see \cite{hatcher});
 \item edge flips are dual to Whitehead moves;
 \item two planar graphs with isotopic duals are themselves isotopic by \cite[Theorem 11]{whitney}.
\end{itemize}

Therefore, we can desingularize and then apply Whitehead moves to $\Gamma$ until it becomes a ``bicycle'' graph as in Figure \ref{fig:bycicle}, with some circles connected linearly by segments; since desingularizing and performing a Whitehead move do not change the genus of the regular neighborhood, there are $g$ circles. Because of the bridge rule \ref{prop:bridge}, the Kauffman bracket is $0$ unless the colors of every connecting edge is $0$, and therefore

\begin{displaymath}
 Y_r(\Gamma,\Omega)=\left(\sum_{i_1,\dots,i_g\in I_r}\Delta_{i_1}^2\cdots\Delta_{i_g}^2\right)=N^g
\end{displaymath}

 \emph{Step 5:} account for the extra $N$ factors.

 At the beginning we added an unknot for each vertex of $\Gamma$, and then for each face. However when we applied the inverse of the chainmail relation we removed the exact same number of components; therefore there is no additional $N$ factor.
 
\end{proof}

\begin{prop}\label{prop:dual}
For any coloring $col$ of a planar graph $\Gamma$,
\begin{displaymath}
 \frac{\pi}{r}\log \abs{Y_r(\Gamma,col)}\leq  \max_{col'}\frac{\pi}{r}\log \abs{Y_r(\Gamma^*,col')} + O\left(\frac{\log r}{r}\right)
\end{displaymath}
where the maximum is taken over all $r$-admissible colorings of the dual graph $\Gamma^*$.
\end{prop}
\begin{proof}

Let $col_{max}$ be an $r$-admissible coloring of $\Gamma^*$ such that $\lvert Y_r(\Gamma^*,col_{max})\rvert$ is maximum.

By Proposition \ref{prop:fourier}, 
\begin{align*}
 &\frac{\pi}{r}\log\abs{Y_r(\Gamma,col)}=\frac{\pi}{r}\log\abs{\sum_{col'}H(col,col'){Y_r(\Gamma^*,col')}}\leq \\ &\frac{\pi}{r}\log\sum_{col'}\abs{H(col,col'){Y_r(\Gamma^*,col_{max})}}=\frac{\pi}{r}\log\abs{Y_r(\Gamma^*,col_{max})}+O\left(\frac{\log r}{r}\right)
\end{align*}

Where the last equality stems from the fact that $\sum_{col'}H(col,col')$ grows polynomially in $r$.
\end{proof}

\begin{cor}
 Conjecture \ref{cng:boundgen} is true for $\Gamma$ if and only if it is true for $\Gamma^*$.

\end{cor}
\begin{proof}
Corollary 4.6 of \cite{maxvol} states that $\vol\left(\overline{\Gamma}\right)=\vol\left(\overline{\Gamma^*}\right)$; this and Proposition \ref{prop:dual} imply the thesis.

\end{proof}

We now turn to the proof of Theorem \ref{teovol}. This will use a few intermediate propositions which we now state and prove.

We first calculate the volume of the rectification of the graphs at hand.

\begin{prop}\label{prop:volrect}
If $\Gamma$ is obtained from the tetrahedron by a sequence of $g$ blow-ups of vertices or triangulations of triangular faces, then
 $$\vol(\overline{\Gamma})=(g+1)v_8.$$
\end{prop}
\begin{proof}
 The case of $g=0$ is well known and appears in \cite[Theorem 4.2]{ushi}.
Take now any $\Gamma$ obtained from $\Gamma'$ by a blow-up of a vertex $v$ and consider $P$ the truncated rectification of $\Gamma'$. The vertex $v$ corresponds to a truncation face of $P$: this face is an ideal triangle. Given an octahedron, we can glue it to $P$ by identifying any of its faces to the face corresponding to $v$ (since they are triangular faces the result does not depend on any choice). Notice that the gluing is done along an ideal triangular face, and along right dihedral angles. It is immediate to see that this gluing gives the truncation of $\overline{\Gamma}$: it has the correct $1$-skeleton (see Figure \ref{fig:blowuprect}) and it is right-angled. Therefore, by blowing up a vertex the maximum volume grows by $v_8$. Dually, triangulating a triangular face makes the maximum volume grow by $v_8$ as well.
 \begin{figure}
 \centering
  \begin{minipage}{.4\textwidth}
  \begin{tikzpicture}
\centering
\draw[thick] (0,1.8)--(0,0)--(-1.8,-1.4);
\draw[thick] (0,0)--(1.8,-1.4);
\draw[color=blue] (0,0.9)--(0.9,-0.7)--(-0.9,-0.7)--(0,0.9);
\end{tikzpicture}
  \end{minipage}
$\xrightarrow{\hspace*{0.8cm}}$
  \begin{minipage}{.4\textwidth}
  \begin{tikzpicture}
\centering
\draw[thick] (0,1.8)--(0,0.9);
\draw[thick] (0.9,-0.7)--(1.8,-1.4);
\draw[thick] (-0.9,-0.7)--(-1.8,-1.4);
\draw[thick] (0,0.9)--(0.9,-0.7)--(-0.9,-0.7)--(0,0.9);
\draw[color=blue] (0,1.8)--(0.45,0.1)--(1.8,-1.4)--(0,-0.7)--(-1.8,-1.4)--(-.45,0.1)--(0,1.8);
\draw[color=blue] (0.45,0.1)--(0,-0.7)--(-.45,0.1)--(.45,.1);
\draw[color=blue] (0,1.8)--(1.8,-1.4)--(-1.8,-1.4)--(0,1.8);
\end{tikzpicture}
  \end{minipage}
  \caption{The $1$-skeleton of the rectification is outlined in blue; a blow-up of a vertex corresponds to gluing an octahedron to its truncation face.}\label{fig:blowuprect}
 \end{figure}
\end{proof}

Next, we prove the upper bound.
\begin{prop}\label{prop:upbound}
If $\Gamma$ is obtained from the tetrahedron by a sequence of $g$ blow-ups of vertices or triangulations of triangular faces, and $col$ is any $r$-admissible coloring, then
\begin{displaymath}
   \frac{\pi}{r}\log\left\lvert Y_r(\Gamma,col)\right\rvert\leq(g+1)v_8+O\left(\frac{\log(r)}{r}\right).
\end{displaymath}
\end{prop}
\begin{proof}
 
The base case $g=0$ is Theorem \ref{teo:bound}.

 If $\Gamma$ is obtained from $\Gamma'$ as a blow-up of a single vertex, then
 \begin{displaymath}
  Y_r(\Gamma,col)=Y_r(\Gamma',col_1)Y_r(T,col_2)
 \end{displaymath}
where $T$ is a tetrahedron, and $col_1,col_2$ are the colorings induced by $col$ on $\Gamma'$ and $T$ respectively. Therefore, $Y_r(\Gamma,col)\leq Y_r(\Gamma',col_1)Y_r(T,col_2)$ and by induction

\begin{displaymath}
\frac{\pi}{r}\log\left\lvert Y_r(\Gamma,col)\right\rvert\leq (g+1)v_8+O\left(\frac{\log(r)}{r}\right).
\end{displaymath}

By Proposition \ref{prop:dual}, this inequality also holds if $\Gamma$ is obtained from $\Gamma'$ by triangulating a single triangular face.

\end{proof}

\begin{figure}
 
\end{figure}

Finally we prove the sharpness of the upper bound.

\begin{prop}\label{prop:r/2}
 If $\Gamma$ is obtained from the tetrahedron by a sequence of $g$ blow-ups of vertices or triangulations of triangular faces, and $col=(\frac{r-2\pm1}{2},\dots,\frac{r-2\pm1}{2})$ (where the signs are chosen so that $r-2\pm1$ is a multiple of $4$), then
 \begin{displaymath}
  \lim_{r\ra+\infty}\frac{\pi}{r}\log\left(Y_r(\Gamma,col)\right)= (g+1)v_8.
 \end{displaymath}

\end{prop}
\begin{proof}

The proof is by induction; the base case is Theorem \ref{teo:bound}. Suppose $\Gamma$ is obtained from the tetrahedron by $g$ blow-ups and triangulations, and at least $1$ blow-up. Then, $\Gamma$ is a vertex sum of $\Gamma_1$ and $\Gamma_2$, with both graphs obtained from the tetrahedron via $g_1$ and $g_2$ blow-ups or triangulations respectively, and $g_1+g_2=g-1$. Since $Y_r(\Gamma,col)=Y_r(\Gamma_1,col_1)Y_r(\Gamma_2,col_2)$ (with $col_1,col_2$ the colorings induced by $col$ on $\Gamma_1,\Gamma_2$ respectively), we have
\begin{align*}
  \lim_{r\ra+\infty}\frac{\pi}{r}\log\left(Y_r(\Gamma,col)\right)&=\lim_{r\ra+\infty}\frac{\pi}{r}\log\left(Y_r(\Gamma_1,col_1)Y_r(\Gamma_2,col_2)\right)=\\ &=(g_1+1+g_2+1)v_8=(g+1)v_8.
 \end{align*}
 
 We need to deal with the case of $\Gamma$ being obtained via $g$ triangulations. In this case, $\Gamma^*$ is obtained from the tetrahedron via $g$ blow-ups.
 Apply the Fourier transform to $Y_r(\Gamma^*,col')$:
 
 \begin{displaymath}
  Y_r(\Gamma,col)=\sum_{col'}H(col,col')Y_r(\Gamma^*,col');
 \end{displaymath}
however, since $col$ is constantly $\frac{r-2\pm1}{2}$ and even, we have

\begin{align*}
 H\left(\frac{r-2\pm1}{2},j\right)=(-1)^j\frac{\sin\left(\frac{2\pi}{r}\frac{r\pm1}{2}(j+1)\right)}{\sin(2\pi/r)}=\\(-1)^j\frac{\sin\left(\pi(j+1)\pm\frac{\pi}{r}(j+1)\right)}{\sin(2\pi/r)}=-\frac{\sin(\pm\frac{\pi}{r}(j+1))}{\sin(2\pi/r)},
\end{align*}
which has $\mp$ sign since $0\leq j\leq r-1$. Moreover, since $\Gamma^*$ is a trivalent graph, $Y_r(\Gamma^*,col')=\lvert\langle \Gamma^*,col'\rangle\rvert^2$ is non-negative for every coloring; therefore, $Y_r(\Gamma,col)$ is a sum with constant sign of $Y_r(\Gamma^*,col')$ over all possible colorings. This shows that $Y_r(\Gamma,col)$ grows as the maximum growth of $Y_r(\Gamma^*,col')$ over all colorings, which is $(g+1)v_8$.
\end{proof}

 Putting Propositions \ref{prop:volrect}, \ref{prop:upbound} and \ref{prop:r/2} together, we obtain the following theorem.
 
\begin{teo}\label{teovol}
 If $\Gamma$ is obtained from the tetrahedron by a sequence of blow-ups of vertices or triangulations of triangular faces, then Conjecture \ref{cng:boundgen} is verified.
\end{teo}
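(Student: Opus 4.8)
The plan is to induct on the number of moves needed to obtain $\Gamma$ from the tetrahedron. In the base case $\Gamma$ is the tetrahedron, $\overline{\Gamma}$ is the regular ideal right-angled octahedron so that $\vol(\overline{\Gamma})=v_8$, and $Y_r(\Gamma,col)=\abs{\langle\Gamma,col\rangle}^{2}$ is the squared modulus of a $6j$-symbol; thus Conjecture \ref{cng:boundgen} for $\Gamma$ is exactly the content of Theorem \ref{teo:bound}, including the sharpness clause, which Theorem \ref{teo:bound} locates at the colouring $\tfrac{r-2\pm1}{2}$. Both of the two moves preserve $3$-connectedness and planarity, so all duals appearing below are well defined.

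For the inductive step the engine is a clean recursion for the blow-up move. Suppose $\Gamma'$ is obtained from $\Gamma$ by blowing up a trivalent vertex $v$, whose three incident edges carry colours $a,b,c$, into a triangle with edge colours $x_a,x_b,x_c$. Near $v$, the graph $\Gamma'$ differs from $\Gamma$ by replacing the trivalent vertex with the ``triangle with three legs'' skein element, whose boundary colours are $a,b,c$; since the space of such skein elements is one-dimensional, this element equals a scalar times the trivalent vertex, and closing both sides with a trivalent vertex identifies the scalar as $\langle T\rangle$ (using that the theta graph has bracket $1$), where $T$ is the tetrahedron whose six edges are coloured $a,b,c,x_a,x_b,x_c$ as in Figure \ref{fig:colamm}. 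This identity is local and, crucially, survives the desingularization of the remaining vertices of $\Gamma$: the scalar $\langle T\rangle$ depends only on colours that $col'$ fixes, so it factors out of every term of the Yokota sum rather than producing new summations. Hence
\begin{displaymath}
 Y_r(\Gamma',col')=\abs{\langle T\rangle}^{2}\,Y_r(\Gamma,col),
\end{displaymath}
where $col$ is the restriction of $col'$ to the edges of $\Gamma$. Combining $\tfrac{2\pi}{r}\log\abs{\langle T\rangle}\le v_8+O(\tfrac{\log r}{r})$ (Theorem \ref{teo:bound}), $\tfrac{\pi}{r}\log\abs{Y_r(\Gamma,col)}\le\vol(\overline{\Gamma})+O(\tfrac{\log r}{r})$ (inductive hypothesis), and the geometric identity $\vol(\overline{\Gamma'})=\vol(\overline{\Gamma})+v_8$ — which holds because $\overline{\Gamma'}$ decomposes as $\overline{\Gamma}$ with a regular ideal right-angled octahedron glued along the truncation triangle at $v$, a fact in the spirit of \cite{maxvol} — yields Conjecture \ref{cng:boundgen} for $\Gamma'$. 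The sharpness clause follows by taking all colours equal to $\tfrac{r-2\pm1}{2}$ and using the sharpness both in Theorem \ref{teo:bound} and in the inductive hypothesis.

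The triangulation move is reduced to the blow-up move by planar duality. If $\Gamma'$ is obtained from $\Gamma$ by triangulating a triangular face $f$, a direct check on the cell structures shows that $(\Gamma')^{*}$ is obtained from $\Gamma^{*}$ by blowing up the (necessarily trivalent) vertex of $\Gamma^{*}$ dual to $f$. Barrett's Fourier transform relates the two: arguing as in the proof of Proposition \ref{prop:dual}, Proposition \ref{prop:fourier} together with the fact that the factor $N^{-g}$ and the values $H(i,j)$ are bounded by polynomials in $r$ shows that an estimate $\tfrac{\pi}{r}\log\abs{Y_r(G,col)}\le V+O(\tfrac{\log r}{r})$, valid for every colouring $col$ of a planar graph $G$, propagates — with the same constant $V$ — to every colouring of $G^{*}$. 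Chaining these implications: the inductive hypothesis for $\Gamma$ gives the estimate for $\Gamma^{*}$ with $V=\vol(\overline{\Gamma^{*}})=\vol(\overline{\Gamma})$ (by $\vol(\overline{G})=\vol(\overline{G^{*}})$, \cite[Corollary 4.6]{maxvol}); the argument of the previous paragraph applied to $\Gamma^{*}$ (blow-up recursion plus Theorem \ref{teo:bound}) gives the estimate for $(\Gamma')^{*}$ with $V=\vol(\overline{\Gamma^{*}})+v_8=\vol(\overline{(\Gamma')^{*}})$; and one more application of the Fourier transform, now from $(\Gamma')^{*}$ to $((\Gamma')^{*})^{*}=\Gamma'$, gives the estimate for $\Gamma'$ with $V=\vol(\overline{(\Gamma')^{*}})=\vol(\overline{\Gamma'})$. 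In particular $\vol(\overline{\Gamma'})=\vol(\overline{\Gamma})+v_8$ in this case as well. For the sharpness clause one runs the same Fourier comparison as a lower bound (using that $TV_r(G)$ and $TV_r(G^{*})$ differ by at most a polynomial factor, and that $TV_r$ is a sum of polynomially many terms $\abs{Y_r(\cdot,col)}$), which already shows that the maximal growth $\vol(\overline{\Gamma'})$ is attained by some colouring; that it is attained by the constant colouring $\tfrac{r-2\pm1}{2}$ requires in addition a direct estimate of that colouring's contribution, as in the base case.

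The step I expect to be the real obstacle is making the blow-up recursion rigorous with the normalizations of Section \ref{sec:yok}: one must verify that the ``triangle with three legs'' is exactly $\langle T\rangle$ times the trivalent vertex in the unitary normalization, and that this identity passes term by term through the desingularization of the other vertices, so that the $6j$-symbol factors cleanly out of the Yokota sum. This is precisely what forces the growth rate to increase by exactly $v_8$ — and not by a larger multiple — at each move; any sloppiness here, such as an accidental extra summation over internal colours, destroys the bound. The two remaining points are more routine: the geometric identity $\vol(\overline{\Gamma'})=\vol(\overline{\Gamma})+v_8$ under a blow-up, which is the sort of statement established in \cite{maxvol} but which must be matched with the combinatorics of the dual move, and the verification that the distinguished colouring $\tfrac{r-2\pm1}{2}$ — not merely some colouring — realises the bound after a triangulation.
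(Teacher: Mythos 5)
Your upper bound argument is, in substance, the paper's own: the blow-up identity $Y_r(\Gamma',col')=\abs{\langle T\rangle}^{2}\,Y_r(\Gamma,col)$ is exactly the vertex-sum multiplicativity of Proposition \ref{prop:yokota} (a blow-up is a vertex sum with a tetrahedron at a trivalent vertex), the volume increment $\vol(\overline{\Gamma'})=\vol(\overline{\Gamma})+v_8$ is obtained by gluing a regular ideal right-angled octahedron along the truncation triangle, and the triangulation case is handled by transporting the bound through Barrett's Fourier transform exactly as in Proposition \ref{prop:dual}. Up to rederiving the vertex-sum property by a local skein computation instead of citing it, this part is correct and coincides with the paper.

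The genuine gap is in the sharpness clause, which is part of Conjecture \ref{cng:boundgen} and therefore of the statement you must prove. When the construction of $\Gamma$ uses triangulations (the mixed case reduces, as you note, to smaller graphs by multiplicativity, so the essential case is a graph built from triangulations only, whose dual is trivalent), your Fourier comparison of Turaev--Viro sums only shows that \emph{some} coloring attains the growth $\vol(\overline{\Gamma})$, and you explicitly defer the claim that the constant coloring $\frac{r-2\pm1}{2}$ does. That deferred step is precisely the nontrivial point: a priori the expansion $Y_r(\Gamma,col)=\sum_{col'}H(col,col')\,Y_r(\Gamma^*,col')$ could suffer massive cancellation, so knowing the maximal growth of the dual terms gives no lower bound on the value at one specific coloring, and a ``direct estimate as in the base case'' is not available since for such $\Gamma$ the invariant at that coloring is not a single $6j$-symbol. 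The paper closes this in Proposition \ref{prop:r/2} by a positivity argument special to that coloring: $H\bigl(\frac{r-2\pm1}{2},j\bigr)=-\sin\bigl(\pm\frac{\pi}{r}(j+1)\bigr)/\sin(2\pi/r)$ has a sign independent of $j$, while $Y_r(\Gamma^*,col')=\abs{\langle\Gamma^*,col'\rangle}^{2}\geq 0$ because the dual, being obtained from the tetrahedron by blow-ups only, is trivalent; hence the Fourier sum has constant sign, no cancellation occurs, and $Y_r(\Gamma,col)$ at the prescribed coloring grows like the largest dual term, i.e. with rate $(g+1)v_8$. Without this (or an equivalent control of cancellation) your argument proves the inequality of Conjecture \ref{cng:boundgen} but not the stated sharpness at the coloring $\frac{r-2\pm1}{2}$.
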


\begin{oss}\label{rmk:volconj}
 Proposition \ref{prop:r/2} actually proves Conjecture \ref{conjmk} for a large family of polyhedra (albeit for a single sequence of colors each) since the volume of a polyhedron with internal angles $0$ is the volume of its rectification (notice how $\frac{2\pi}{r}\frac{r\pm1-2}{2}\ra\pi$ as $r\ra +\infty$). 
\end{oss}

\section{The Turaev-Viro Volume Conjecture}\label{sec:tvvolconj}

In this section we apply Theorem \ref{teovol} to prove the Turaev-Viro Volume Conjecture for an infinite family of examples.

Recall (for example from \cite[Section 4.2]{lickorish}) that the Reshetikhin-Turaev invariant of a colored, framed link $L$ in a manifold $M$ is defined as 
$$RT_r(M,L,col)=\eta \kappa^{\sigma(L')}\langle L\sqcup L', col\cup \Omega\rangle$$
where:
\begin{itemize}
 \item $L'\subseteq S^3$ is a framed link giving $M$ via Dehn surgery,
 \item $L\sqcup L'$ is the disjoint union of $L'$ and $L$ viewed as a subset of $S^3$ (if need be, after isotoping $L$ to be disjoint from $L'$),
 \item the components of $L'$ are all colored with $\Omega$,
 \item $\sigma(L')$ is the signature of the linking matrix of $L'\subseteq S^3$,
 \item $\eta=\langle U,\Omega\rangle^{-1}=\frac{A^2-A^{-2}}{\sqrt{-2r}}$, and
 \item $\kappa=\langle U_+,\Omega\rangle$ where $U_+$ is the unknot with framing equal to $+1$.
\end{itemize}

\begin{prop}\label{prop:sameinv}
 Let $\Gamma\subseteq S^3$ be a graph obtained from the tetrahedron by a sequence of $g-1$ blow-ups of vertices or triangulations of triangular faces as in the hypothesis of Theorem \ref{teovol}; let $e_1,\dots,e_k$ be its edges, and denote with $h$ the number of vertices of $\Gamma$. Then there is a $k$-component link $L=L_1\sqcup\cdots\sqcup L_k$ in $S^3\#^{h-1}\left(S^1\times S^2\right)$ such that for any coloring $col\in I_r^k$ (seen both as a coloring of $\Gamma$ and as a coloring of $L$) we have
 
 $$Y_r(\Gamma,col)=RT_r\left(S^3\#^{h-1}\left(S^1\times S^2\right),L,col\right)$$
 
\end{prop}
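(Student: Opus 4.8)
The plan is to run the chainmail presentation of $Y_r$ from the proof of Proposition~\ref{prop:fourier} in tandem with the surgery formula for $RT_r$. Recall from that proof (the Chainmail Rule, Figure~\ref{fig:chainmail}) that $Y_r(\Gamma,col)$ equals, up to the explicit normalisation $\langle U,\Omega\rangle^{-h}=N^{-h/2}$ (one factor per vertex; the power is pinned down by testing $col\equiv 0$, where $Y_r(\Gamma,col)=1$), the Kauffman bracket of a link $\widehat\Gamma\subseteq S^3$: replace each vertex of $\Gamma$ by a $0$-framed unknot coloured with the Kirby colour $\Omega$, and each edge $e$ by a $col$-coloured unknot $C_e$ that Hopf-links the two vertex circles at the endpoints of $e$, so that the $h$ vertex circles form a $0$-framed unlink and $\{C_e\}_e$ survives as a $k$-component link once the vertex circles are used as surgery curves. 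The first step is therefore to view the $h$ vertex circles as a surgery link: their linking matrix vanishes, so there is no framing correction, surgery on them produces a connected sum of copies of $S^1\times S^2$, and $\langle\widehat\Gamma\rangle$ becomes, up to a further explicit power of $\langle U,\Omega\rangle$ fixed by the normalisation of $RT_r$, the invariant $RT_r$ of the image of $\{C_e\}_e$ in that manifold.

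The second step identifies this manifold as $\#^{h-1}(S^1\times S^2)$ rather than $\#^{h}(S^1\times S^2)$. Since $\Gamma$ is connected, one can pick a spanning tree and, handlesliding along the chains of Hopf-linked circles coming from the tree edges, arrange that one of the $h$ vertex circles bounds an embedded disc disjoint from the remainder of $\widehat\Gamma$; deleting it (at the cost of one factor of $\langle U,\Omega\rangle$) drops the number of $S^1\times S^2$-summands by one and leaves a genuine $k$-component link $L=L_1\sqcup\dots\sqcup L_k$, one component per edge, inside $\#^{h-1}(S^1\times S^2)$. Assembling the powers of $\langle U,\Omega\rangle=\sqrt N$ --- the $\langle U,\Omega\rangle^{-h}$ from the Chainmail Rule, those produced when converting a surgery diagram into $RT_r$, and the one from the handle cancellation --- one checks that they collapse to exactly $\bigl(\tfrac{2}{\sqrt r}\sin(2\pi/r)\bigr)^{h}$, which is the claimed prefactor.

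The main obstacle is precisely this bookkeeping: getting the $-1$ in $\#^{h-1}(S^1\times S^2)$ right, and then tracking every power of $\sqrt N$ through the Chainmail Rule, the surgery/$RT_r$ normalisation and the handle cancellation so that the exponent is exactly $h$ and not shifted. A cleaner and more robust route, parallel to the proof of Proposition~\ref{prop:r/2}, is an induction on the number of blow-ups and triangulations used to build $\Gamma$: for a triangulation of a triangular face the chainmail link simply gains one vertex circle and three edge circles, which adds exactly one $S^1\times S^2$-summand and three link components, so the inductive step is immediate there; for a blow-up of a trivalent vertex one has the vertex sum $\Gamma=\Gamma'\#_v T$ with a tetrahedron, whence $Y_r(\Gamma,col)=Y_r(\Gamma',col_1)Y_r(T,col_2)$ by Proposition~\ref{prop:yokota}.\ref{prop:vertexsum}, while on the topological side this vertex sum corresponds to gluing the two surgered manifolds along the non-separating belt spheres produced by the surgeries on the two summed vertex circles. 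As this operation changes first Betti numbers by $(h'-1)+(4-1)-1=h-1$ (matching $h=h'+2$), and the $RT_r$ gluing formula along those $3$-punctured spheres reproduces both the multiplicativity of $Y_r$ and the single extra factor of $\langle U,\Omega\rangle$ needed to reconcile exponents, one concludes by induction from the base case $\Gamma=T$, for which the identity is a direct computation with the four-vertex, six-edge chainmail link yielding a $6$-component link in $\#^{3}(S^1\times S^2)$.
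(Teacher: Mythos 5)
Your main argument is essentially the paper's proof: present $Y_r(\Gamma,col)$ via the chainmail link with $\Omega$-coloured vertex circles, handleslide one vertex circle over the others (along a spanning tree) until it splits off as an unknot, and read the remaining $h-1$ zero-framed $\Omega$-coloured circles as a surgery presentation of $\#^{h-1}(S^1\times S^2)$ containing the $k$ edge circles as the colouring-independent link $L$, finally matching the powers of $\langle U,\Omega\rangle$ coming from the chainmail normalisation, the split unknot and the $RT_r$ surgery formula. The inductive variant sketched in your last paragraph is unnecessary and its gluing step is less solid, but the direct argument you give first is exactly the one in the paper.
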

\begin{proof}
 We have seen in the proof of Proposition \ref{prop:fourier} (specifically, in Step 1) that there is a way to associate to any $\left(\Gamma,col\right)$ a colored framed link $L(\Gamma,col)$ in $S^3$ such that $Y_r(\Gamma,col)=\langle L(\Gamma,col)\rangle$. The link $L(\Gamma,col)$ is a link with $k+h$ components; $k$ of these components are in bijection with the edges of $\Gamma$ and are colored with the corresponding color of $col$. The other $h$ are unknotted components in bijection with the vertices of $\Gamma$ and are colored with $\Omega$. The link $L(\Gamma,col)$ (without its coloring) almost satisfies the requirements we desire; however it has one too many components.
 
 We now want to remove a component from $L(\Gamma,col)$; we do this so that the end result of the proposition is a  link in $S^3\#^{h-1} S^1\times S^2$ rather than a link in $S^3\#^h S^1\times S^2$.
 
Pick an equatorial $S^2$ in $S^3$ and isotope $L(\Gamma,col)$ so that all its $\Omega$-colored components lie flat on it, and all other components intersect the $S^2$ twice; the fact that this can be done is evident from the construction of $L(\Gamma,col)$. Each $\Omega$-colored component will bound a disk that contains the intersection of its edges with $S^2$; every intersection lands inside one of these disks. Pick a component of $L(\Gamma,col)$ colored with $\Omega$: it is possible to handleslide it along each other $\Omega$-colored component without modifying the Kauffman bracket (by \cite[Corollary, page 181]{lickorish}). Once one such handleslide is performed, the new curve will bound a disk that contains the intersection points of both families of curves. Repeating this procedure and handlesliding the chosen component over all others will make it bound a disk containing all transverse intersection points of $L(\Gamma,col)$ with the plane, thus making it unlinked from everything; therefore $\langle L(\Gamma,col)\rangle=\langle U\rangle\langle L(\Gamma,col)'\rangle=\eta^{-1}\langle L(\Gamma,col)'\rangle$ where $U$ is an unknotted, unlinked component colored with $\Omega$ and $L(\Gamma,col)'$ is the remaining part of the colored link.
 By the definition of the Reshetikhin-Turaev invariant of links $$\langle L(\Gamma,col)'\rangle=\eta RT_r(S^3\#^{h-1}\left(S^1\times S^2\right),L,col)$$ where $L$ is the link obtained from $L(\Gamma,col)'$ by doing a $0$-framed Dehn surgery on the components of $L(\Gamma,col)$ colored with $\Omega$. Notice that $L$ only depends on $\Gamma$ and not on the coloring.
\end{proof}

\begin{dfn}
 We denote the link constructed in Proposition \ref{prop:sameinv} with $K(\Gamma)$ (notice that this is a link rather than a colored link). The next several Propositions explore the geometric properties of $K(\Gamma)$, culminating in proving the Turaev-Viro Volume Conjecture for it.
\end{dfn}

\begin{figure}
 \centering
  \begin{tikzpicture}[scale=0.8]
 \draw[thick][rotate=-45] (0,0) ellipse (1cm and 0.5cm);
 
 \draw[thick][rotate around={-45:(4,4)}] (4,4) ellipse (1cm and 0.5cm);

 \draw[thick][rotate around={45:(4,0)}] (4,0) ellipse (1cm and 0.5cm);
 \draw[thick][rotate around={45:(0,4)}] (0,4) ellipse (1cm and 0.5cm);
\draw [thick](4.71,0.71)[blue] to[out=135, in=-135] (4.71,3.29);

\draw [thick](0.71,-0.71)[red] to[out=45, in=135] (3.29,-0.71);
\draw [thick](-0.71,0.71)[green] to[out=45, in=-45] (-0.71,3.29);
\draw [thick](0.71,4.71)[orange] to[out=-45, in=-135] (3.29,4.71);
\draw [thick](0.36,0.36)[brown] -- (3.64,3.64);
\draw [thick](-0.36,4.36)[magenta,dashed]--(4.36,-0.36);
\end{tikzpicture}
\caption{The building block: a ball with $4$ disks in its boundary, and $6$ arcs connecting them.}\label{fig:bblock}
\end{figure}
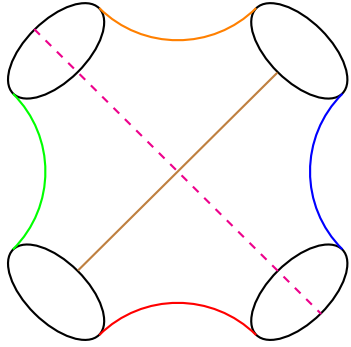

\begin{prop}\label{prop:volnuovo}
 Let $\Gamma\subseteq S^3$ be a graph obtained from the tetrahedron by a sequence of $g-1$ blow-ups of vertices or triangulations of triangular faces; suppose $\Gamma$ has $k$ edges and $h$ vertices. Then, $L:=K\left(\Gamma\right)$ is hyperbolic, and the hyperbolic structure on its complement is obtained by gluing $4g$ right-angled hyperbolic ideal octahedra.
\end{prop}
\begin{proof}
 Let $\overline{\Gamma}$ be the rectification of $\Gamma$, and let $P$ be its truncation. We have seen in the proof of Theorem \ref{teovol} that $P$ can be obtained by gluing $g$ right-angled hyperbolic octahedra. Take two copies of $P$ and glue them along each corresponding truncation face. This gives a manifold homeomorphic to a handlebody of genus $h-1$ with some annuli removed from the boundary (corresponding to the ideal vertices of $P$); the decomposition into octahedra makes it into a finite volume manifold $M$ with geodesic boundary. Take the double of $M$ along the geodesic boundary: this gives a manifold $N$ which is homeomorphic to $S^3\#^{h-1}\left(S^1\times S^2\right)\setminus L$. 
 
 To see this, take an octahedron $O$ and truncate a small link of each of its vertices. This truncation can be seen as the basic building block of the fundamental shadow links (see Figure \ref{fig:bblock} and \cite[Proposition 3.33]{CosThurston}): each truncated vertex corresponds to an arc, four of the faces of the octahedron correspond to the discs and the remaining four faces correspond to the regions of the spheres delimited by the arcs. 
 
 The polyhedron $P$ is obtained by gluing octahedra together following a certain pattern; we can glue the building blocks in the same pattern. The result of this gluing is a ball with $h$ discs on its boundary and some arcs connecting the discs. If we take the double of this ball along the discs we obtain a genus $h-1$ handlebody with a link in its boundary. The link $L(\Gamma,col)$ corresponds to the link on the boundary of the handlebody plus $h-1$ components corresponding to the boundary of the gluing disks (after pushing them out of the handlebody slightly).
 
 Doubling this handlebody is equivalent to performing $0$-surgery on each of these $h-1$ components in $S^3$, therefore by doing this we obtain $S^3\#^{h-1} (S^1\times S^2)$ as ambient manifold and the link in the boundary gives $L$.
 
\end{proof}

\begin{prop}\label{prop:diff}
 Let $\Gamma$ be a graph obtained from the tetrahedron by a sequence of $g-1$ blow-ups of vertices or triangulations of triangular faces; let $t$ be the maximal number of disjoint triangular faces in the truncation of $\overline{\Gamma}$. Let $L:=K(\Gamma)$, and $E_L$ be its complement. Then $E_L$ contains at most $t+2g-2$ disjoint geodesic thrice-punctured spheres.
\end{prop}
\begin{proof}
The reasoning in this proof is similar to the proof of \cite[Proposition 3.4]{CPFM}.

Let $P$ be the truncation of $\overline{\Gamma}$; we have seen in the proof of Proposition \ref{prop:volnuovo} that $E_L$ is obtained by doubling $P$ along the truncation faces (to obtain a hyperbolic manifold with geodesic boundary $H$) and doubling again along the geodesic boundary.

 The truncation faces of $P$ can be colored with black and the remaining with white; this way two faces of the same color never share an edge.
 
 The proof of Proposition \ref{prop:volnuovo} shows that $E_L$ decomposes into octahedra; take $O$ an octahedron in this decomposition, and let $S$ be any thrice-punctured sphere.
 
 \begin{figure}
  \centering
  
  \begin{tikzpicture}[scale=0.7]
\centering
\draw[thick] (0,0)--(4,0)--(2,3.4)--(0,0);
\draw[thick] (-.8,-.45)--(4,2.2);
\draw[thick] (4.8,-0.45)--(0,2.2);
\draw[thick] (2,3.4)--(2,-.5);
\draw[color=blue, fill=blue] (0,0) circle[radius=0.1cm];
\draw[color=blue, fill=blue] (4,0) circle[radius=0.1cm];
\draw[color=blue, fill=blue] (2,3.4) circle[radius=0.1cm];
\end{tikzpicture}
\caption{The $6$ geodesic in a thrice punctured sphere cutting it into triangles.}\label{fig:cutup}
 \end{figure}
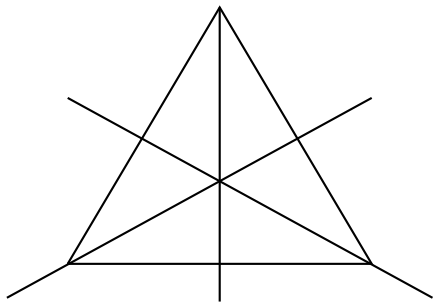

 \textit{Claim:} $S\cap O$ is either the empty set or a facet of $O$.
 
 We will prove the claim later; for now let us see how this concludes the proof.
 
 Let $\mathcal{S}$ be a set of disjoint thrice-punctured spheres. This determines a set of disjoint ideal triangles in each of the four copies of $P$ that make up $E_L$; some of them are in the boundary of a polyhedron while some of them are properly embedded. Each polyhedron contains exactly $g-1$ disjoint properly embedded geodesic triangles (the ones that decompose $P$ into octahedra). These glue up to give $2g-2$ disjoint thrice-punctured spheres in $E_L$. Furthermore, a disjoint collection $T_1,\dots,T_t$ of triangles in $\partial P$ induces a set of disjoint thrice-punctured spheres. Therefore, there are at most $2g-2+t$ disjoint thrice-punctured spheres in $E_L$.

 \textit{Proof of the claim.}

 We first look at $S\cap O$ as a subset of $S$. It must be a convex region of $S$ delimited by geodesics. Since $S$ contains exactly $6$ maximal embedded geodesics (since it contains no closed geodesics and maximal embedded geodesics are determined by the cusp in which they end) the possible configurations are easy to list. Figure \ref{fig:cutup} shows the $6$ geodesics cutting $S$ into triangles; the possibilities for $S\cap O$ can be obtained by looking at all the possible ways to glue these triangles to obtain a convex set. The convex subsets of $S$ obtained by gluing triangle regions are:
 
 \begin{enumerate}
  \item \label{ref:case1} a triangle with $1$ ideal vertex (a single triangle region);
  \item \label{ref:case2} a triangle with $2$ ideal vertices (gluing two triangle regions without an ideal vertex in common);
  \item \label{ref:case3} a square with one ideal vertex and two right angles (gluing two triangle regions with an ideal vertex in common);
  \item \label{ref:case4} a triangle with two ideal vertices and a right angle (gluing a triangle region to the triangle in Case \ref{ref:case2});
  \item \label{ref:case5} a square with two ideal vertices (gluing two triangles in Case \ref{ref:case2} along a geodesic);
  \item \label{ref:case6} a bigon with one ideal point in its interior (gluing all triangle regions sharing an ideal vertex);
  \item \label{ref:case7} a bigon with one ideal point in its boundary (gluing two triangle regions that have all the edges on the same geodesics);
  \item \label{ref:case8}a region with three ideal points (obtained in several possible ways).
 \end{enumerate}

 Every other possible way of gluing together the triangle regions of Figure \ref{fig:cutup} does not give a convex subset.

 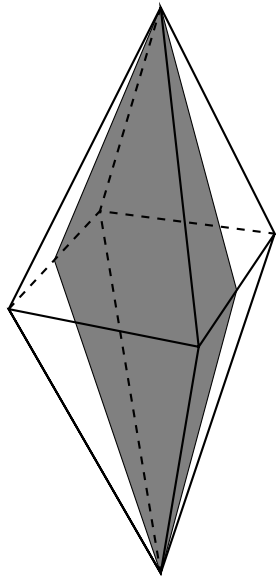
\begin{figure}
  \centering
  \begin{tikzpicture}[scale=0.7]
  \centering
  
  \draw[fill=gray] (2,4)--(3,0.25)--(2,-3.5)--(0.6,0.65)--(2,4);
  \draw[thick] (2,4)--(0,0)--(2,-3.5)--(2.5,-.5)--(3.5,1)--(2,4)--(2.5,-.5)--(0,0)--(2,-3.5)--(3.5,1);
  \draw[thick, dashed] (0,0)--(1.2,1.3)--(3.5,1);
  \draw[thick,dashed](2,4)--(1.2,1.3)--(2,-3.5);
  
\end{tikzpicture}
\caption{A square arising as the intersection of a thrice-punctured sphere and an octahedron of $E_L$.}\label{fig:intersection}
 \end{figure}
 On the other hand, $S\cap O$ as a subset of $O$ must coincide with the intersection of $O$ with a plane $\Pi\subseteq \h$; therefore it cannot be either a bigon with an ideal point in its interior (Case \ref{ref:case6}) or a bigon with an ideal point in its boundary (Case \ref{ref:case7}), since these regions cannot be realized as a hyperbolic polygon in $\h$ (hence, neither in $O$). Moreover, $\Pi\cap O$ cannot be a triangle with one or two ideal vertices (this excludes Cases \ref{ref:case1}, \ref{ref:case2} and \ref{ref:case4}), nor can it be a square with one ideal vertex and two right angles (Case \ref{ref:case3}), since none of these configurations can be realized as intersections of a plane with $O$. The remaining possibilities are that $S\cap O$ is a region with $3$ ideal points (Case \ref{ref:case8}), a square with two ideal vertices (Case \ref{ref:case5}: see Figure \ref{fig:intersection}), or a facet of dimension $0$ or $1$. However by construction $O$ is glued to at least three octahedra which are different from $O$ and each other; therefore the case of a square with two ideal vertices is impossible since the intersection of $S$ with these octahedra must also be a square with $2$ ideal vertices, which would contradict the fact that $S$ is a thrice-punctured sphere. Finally there are no properly embedded, totally geodesic surfaces with exactly $3$ ideal points in $O$; therefore the only possibility is that it is a face of $O$. To sum up, the only possible cases are that $S\cap O$ (when non-empty) is a vertex, an edge, or a face of $O$, therefore $S\cap O$ must be a facet of $O$. This concludes the proof of the claim.

\end{proof}

\begin{oss}\label{rmk:newmanifolds}
 If $M$ is the exterior of a fundamental shadow link with volume $2nv_8$, then it contains exactly $2n$ disjoint geodesic thrice-punctured spheres. This can be used to show that some of the exterior of the links provided by Proposition \ref{prop:sameinv} are not homeomorphic to exteriors of fundamental shadow links; the simplest such example is the link associated to the graph shown in Figure \ref{fig:example}. An easy check shows that the truncation of $\overline{\Gamma}$ contains at most $6$ disjoint triangular faces: they correspond to the truncation faces of the three vertices on the left half of the picture, and to the three triangular faces on the right half of the picture. This means that (by Proposition \ref{prop:diff}) $E_L$ contains at most $10$ thrice-punctured spheres and has volume $12 v_8$; on the other hand a fundamental shadow link complement with the same volume as $E_L$ must contain $12$ such spheres.
 
 More generally, if $\Gamma$ is obtained from the tetrahedron through at least one triangulation and at least one blow-up, then the associated link exterior is not diffeomorphic to the exterior of a fundamental shadow link (and there is at least one such manifold of volume $4nv_8$ for each $n>1$).
\end{oss}
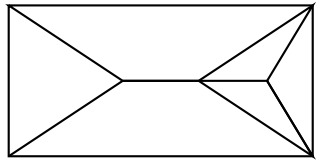
\begin{figure}
 \centering
  \begin{tikzpicture}
  \centering
  \draw[thick] (0,2)--(0,0)--(4,0)--(4,2)--(0,2)--(1.5,1)--(3.4,1)--(4,0);
  \draw[thick] (0,0)--(1.5,1)--(2.5,1)--(4,2)--(3.4,1)--(4,0)--(2.5,1);
  
\end{tikzpicture}
\caption{A graph whose link is not a fundamental shadow link.}\label{fig:example}
\end{figure}

\begin{teo}\label{teo:volconj}
 Let $\Gamma\subseteq S^3$ be a graph obtained from the tetrahedron by a sequence of $g-1$ blow-ups of vertices or triangulations of triangular faces; suppose $\Gamma$ has $k$ edges and $h$ vertices. Take the $k$-component link $L:=K(\Gamma)\subseteq S^3\#^{h-1}\left(S^1\times S^2\right)$. Then the Turaev-Viro Volume Conjecture (Conjecture \ref{volconj}) holds for the exterior of $L$.
\end{teo}
\begin{proof}
 Theorem \ref{teovol} implies that for any choice of $r$-admissible coloring $col$, $$\frac{\pi}{r}\log\lvert RT_r(S^3\#^{h-1}\left(S^1\times S^2\right),L,col)\rvert=\frac{\pi}{r}\log\lvert Y_r(\Gamma,col)\rvert \leq gv_8+O\left(\frac{\log(r)}{r}\right).$$
 
 The equality is a consequence of Proposition \ref{prop:sameinv}; the subsequent inequality is the content of Theorem \ref{teovol}.
 Furthermore if we denote with $c$ the coloring $\left(\frac{r\pm1}{2},\dots,\frac{r\pm1}{2}\right)$ (where the sign is chosen so that the color is always even), we have 
 $$\frac{\pi}{r}\log\lvert RT_r(S^3\#^{h-1}\left(S^1\times S^2\right),L,c)\rvert=\frac{\pi}{r}\log\lvert Y_r(\Gamma,c)\rvert = gv_8+O(\log(r)/r)$$
 because of Proposition \ref{prop:r/2}.
 
 If $E_L$ is the exterior of $L$, then
 $$TV_r(E_L)=\sum_{col\in I_r^k}\lvert RT_r(S^3\#^{h-1}\left(S^1\times S^2\right),L,col)\rvert^2$$ by \cite[Proposition 5.3]{bound6j}, and $\vol(E_L)=4gv_8$ by Proposition \ref{prop:volnuovo}. 
 
 This implies the thesis since
 
 \begin{displaymath}
 \lim_{r\ra\infty} \frac{2\pi}{r}\log\left(TV_r(E_L)\right)=4gv_8
 \end{displaymath}
because the sum in the formula for $TV_r(E_L)$ has polynomially many terms all with the same sign.
\end{proof}

\begin{oss}
 There is an overlap between Theorem \ref{teo:volconj} and Theorem 1.1 of \cite{bound6j}. Some links of Theorem \ref{teo:volconj} are also Fundamental Shadow Links (FSL); namely, those links corresponding to graphs obtained from the tetrahedron by blow-ups. However as we have seen in Remark \ref{rmk:newmanifolds} (infinitely) many others are not.
\end{oss}

\appendix

\section{Appendix: Numerical evidence for Conjecture \ref{conjmk}}\label{appendix}

Supporting evidence for Conjecture \ref{conjmk} in the case of simple polyhedra can be found in \cite{murkolp}. In this appendix we show numerical computations supporting the conjecture for the square and pentagonal pyramids; all the calculations are performed with the Mathematica software. The notebook is available on GitHub at \url{https://github.com/Giulio451/UpperBound}; all calculations were performed on a Dell XPS 13 laptop.

\emph{The ideal regular square pyramid.} 

By Bao-Bonahon (\cite[Theorem 1]{bonbao}) there is a unique square pyramid such that the angles at the base are $\frac{\pi}{4}$ and the vertical angles are $\frac{\pi}{2}$. Such a pyramid is ideal and is maximally symmetric; it is decomposed into two ideal tetrahedra with angles $\frac{\pi}{4},\frac{\pi}{4},\frac{\pi}{2}$ hence its hyperbolic volume is equal to $4\Lambda\left(\frac{\pi}{4}\right)=\frac{v_8}{2}\sim 1.83193$ (where $\Lambda$ is the Lobachevski function). Consider the coloring of Figure \ref{fig:sqpyr}; it converges to the angles of the ideal pyramid in the sense of Conjecture \ref{conjmk}.

\begin{figure}\centering
   \begin{tikzpicture}
\centering
\draw[thick](0,0)--node[above left]{$\floor*{\frac{r}{4}}$}(2,2);
\draw[thick](4,0)--node[above right]{$\floor*{\frac{r}{4}}$}(2,2);
\draw[thick](0,4)--node[below left]{$\floor*{\frac{r}{4}}$}(2,2);
\draw[thick](4,4)--node[below right]{$\floor*{\frac{r}{4}}$}(2,2);
\draw[thick](0,0)--node[below]{$\floor*{\frac{3r}{8}}$}(4,0);
\draw[thick](4,4)--node[right]{$\floor*{\frac{3r}{8}}$}(4,0);
\draw[thick](0,0)--node[left]{$\floor*{\frac{3r}{8}}$}(0,4);
\draw[thick](0,4)--node[above]{$\floor*{\frac{3r}{8}}$}(4,4);
\end{tikzpicture}
\caption{The coloring of a square pyramid associated to the ideal regular pyramid}\label{fig:sqpyr}
\end{figure}
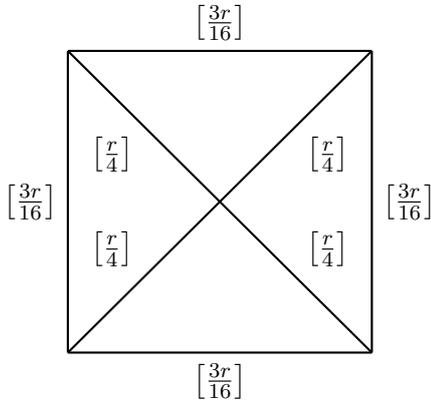

Its Yokota invariant can be calculated by desingularizing the $4$-valent vertex and by using the vertex sum formula; it is given by

\begin{displaymath}
 \sum_{k\in I_r}\Delta_k\begin{vmatrix}
   \floor*{\frac{r}{4}} &\floor*{\frac{r}{4}} &k\\
   \floor*{\frac{3r}{8}} &\floor*{\frac{3r}{8}} &\floor*{\frac{3r}{8}} 
  \end{vmatrix}^4
\end{displaymath}

where $\floor*{x}$ is the floor of $x$. The growth is shown in the following graph.

\includegraphics[width=.87\textwidth]{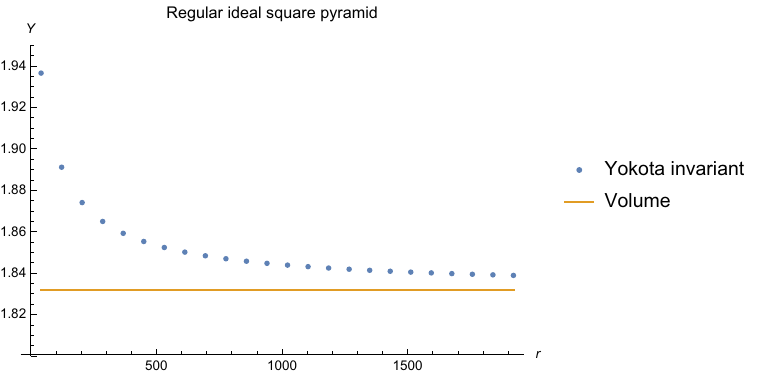}

\emph{The $0$-angled squared pyramid}

Because of \cite[Theorem 4.2]{maxvol}, the square pyramid with every dihedral angle equal to $0$ exists and attains the maximum volume of any square pyramid (it is in fact the rectified pyramid). Its truncation is the right-angled ideal square antiprism. The volume of a right-angled ideal antiprism with $n$-gonal face is given by \cite[Page 151]{thurston} $$2n\left(\Lambda\left(\frac{\pi}{4}+\frac{\pi}{2n}\right)+\Lambda\left(\frac{\pi}{4}-\frac{\pi}{2n}\right)\right)$$

and for $n=4$ this gives $\sim 6.02305$. 

Color the pyramid with the color $\floor*{\frac{r}{2}}$ at every vertex; this coloring converges to the angles of the rectified pyramid.

Its Yokota invariant is given by

\begin{displaymath}
 \sum_{k\in I_r}\Delta_k\begin{vmatrix}
   \floor*{\frac{r}{2}} &\floor*{\frac{r}{2}} &k\\
   \floor*{\frac{r}{2}} &\floor*{\frac{r}{2}} &\floor*{\frac{r}{2}} 
  \end{vmatrix}^4
\end{displaymath}

and its growth is shown in the following graph.

\includegraphics[width=.87\textwidth]{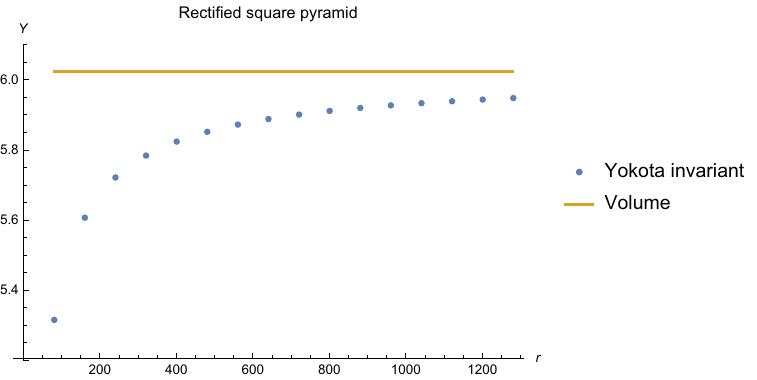}

\emph{The ideal regular pentagonal pyramid.} 

As before there is a unique ideal pentagonal pyramid with vertical angles $\frac{3\pi}{5}$ and base angles $\frac{\pi}{5}$; this pyramid is maximally symmetric. We can decompose it into $3$ ideal tetrahedra, two with dihedral angles $\frac{\pi}{5},\frac{\pi}{5},\frac{3\pi}{5}$ and the remaining with dihedral angles $\frac{\pi}{5},\frac{2\pi}{5},\frac{2\pi}{5}$. Its volume then is $$5\Lambda\left(\frac{\pi}{5}\right)+2\Lambda\left(\frac{2\pi}{5}\right)+\Lambda\left(\frac{3\pi}{5}\right)\sim2.49339.$$

\begin{figure}
\centering
 \begin{tikzpicture}
  \centering
  \draw[thick](0,0)--node[above left]{$\floor*{\frac{r}{5}}$}(2,2);
\draw[thick](4,0)--node[above right]{$\floor*{\frac{r}{5}}$}(2,2);
\draw[thick](-0.5,3.5)--node[below left]{$\floor*{\frac{r}{5}}$}(2,2);
\draw[thick](4.5,3.5)--node[below right]{$\floor*{\frac{r}{5}}$}(2,2);
\draw[thick](2,5)--node[right]{$\floor*{\frac{r}{5}}$}(2,2);
\draw[thick](0,0)--node[below]{$\floor*{\frac{2r}{5}}$}(4,0);
\draw[thick](4.5,3.5)--node[right]{$\floor*{\frac{2r}{5}}$}(4,0);
\draw[thick](0,0)--node[left]{$\floor*{\frac{2r}{5}}$}(-0.5,3.5);
\draw[thick](-0.5,3.5)--node[above]{$\floor*{\frac{2r}{5}}$}(2,5);
\draw[thick](4.5,3.5)--node[above]{$\floor*{\frac{2r}{5}}$}(2,5);
 \end{tikzpicture}
\caption{The coloring of the pentagonal pyramid corresponding to an ideal regular pyramid}\label{fig:pentpyr}
\end{figure}
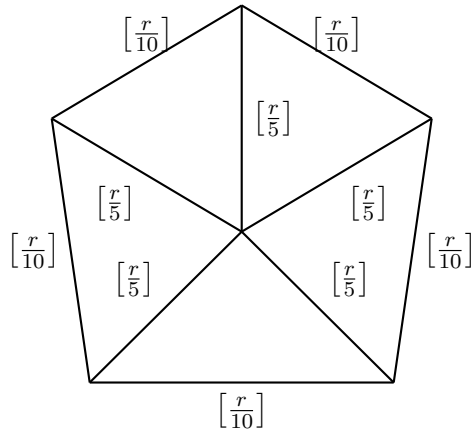

Consider the coloring in Figure \ref{fig:pentpyr}, converging to the angles of the ideal pyramid. Its Yokota invariant can be calculated (by desingularization and the vertex sum formula) as

\begin{displaymath}
 \sum_{k,j\in I_r} \Delta_k\Delta_j\left|\left(\begin{vmatrix}
   \floor*{\frac{2r}{5}} &\floor*{\frac{2r}{5}} &k\\
   \floor*{\frac{r}{5}} &\floor*{\frac{r}{5}} &\floor*{\frac{r}{5}} 
  \end{vmatrix}\begin{vmatrix}
   \floor*{\frac{2r}{5}} &\floor*{\frac{2r}{5}} &j\\
   \floor*{\frac{r}{5}} &\floor*{\frac{r}{5}} &\floor*{\frac{r}{5}} 
  \end{vmatrix}\begin{vmatrix}
   \floor*{\frac{2r}{5}} &k &j\\
   \floor*{\frac{r}{5}} &\floor*{\frac{r}{5}} &\floor*{\frac{r}{5}} 
  \end{vmatrix}\right)\right|^2
\end{displaymath}

and its growth is shown in the following graph.

\includegraphics[width=.88\textwidth]{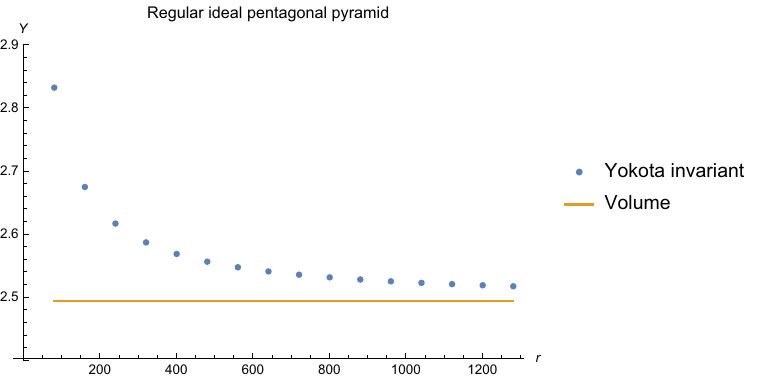}

\emph{The $0$-angled pentagonal pyramid}

The truncation of the rectified pentagonal pyramid is the pentagonal antiprism, whose volume is equal to $\sim8.13789$, and the corresponding Yokota invariant is

\begin{displaymath}
 \sum_{k,j\in I_r} \Delta_k\Delta_j\left|\left(\begin{vmatrix}
   \floor*{\frac{r}{4}} &\floor*{\frac{r}{4}} &k\\
   \floor*{\frac{r}{4}} &\floor*{\frac{r}{4}} &\floor*{\frac{r}{4}} 
  \end{vmatrix}\begin{vmatrix}
   \floor*{\frac{r}{4}} &\floor*{\frac{r}{4}} &j\\
   \floor*{\frac{r}{4}} &\floor*{\frac{r}{4}} &\floor*{\frac{r}{4}} 
  \end{vmatrix}\begin{vmatrix}
   \floor*{\frac{r}{4}} &k &j\\
   \floor*{\frac{r}{4}} &\floor*{\frac{r}{4}} &\floor*{\frac{r}{4}} 
  \end{vmatrix}\right)\right|^2.
\end{displaymath}

Because of the greater range of the sum, it is considerably slower to compute than the other examples; we were only able to arrive to level $r=321$, and the Yokota invariant is within $4\%$ of the volume, as can be seen from the following graph. However this is similar to the error (at level $321$) in the previous examples.

\includegraphics[width=.86\textwidth]{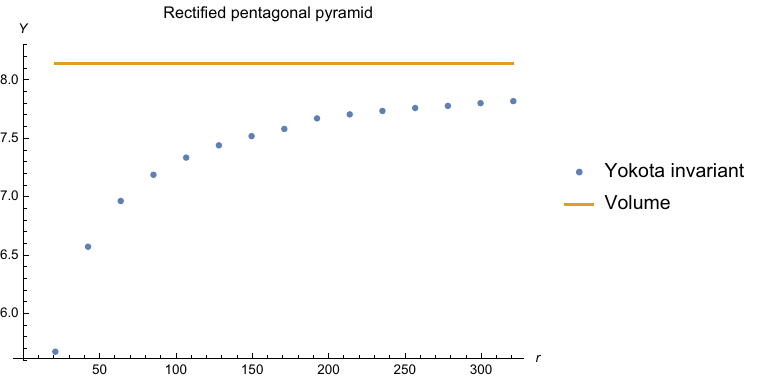}

\bibliographystyle{alpha}
\bibliography{Bibliography}

\def\cprime{$'$}
\begin{thebibliography}{BFMGI07}

\bibitem[Bar03]{bar}
J.W. Barrett.
\newblock Geometrical measurements in three-dimensional quantum gravity.
\newblock {\em Internat. J. Modern Phys. A}, 18(supp02):97--113, 2003.

\bibitem[BB02]{bonbao}
X.~Bao and F.~Bonahon.
\newblock Hyperideal polyhedra in hyperbolic 3-space.
\newblock {\em Bull. Soc. Math. de France}, 130(3):457--491, 2002.

\bibitem[BDKY22]{bound6j}
G.~Belletti, R.~Detcherry, E.~Kalfagianni, and T.~Yang.
\newblock Growth of quantum $6 j $-symbols and applications to the volume
  conjecture.
\newblock {\em Journal of differential geometry}, 120(2):199--229, 2022.

\bibitem[Bel21]{maxvol}
G.~Belletti.
\newblock The maximum volume of hyperbolic polyhedra.
\newblock {\em Transactions of the American Mathematical Society},
  374(2):1125--1153, 2021.

\bibitem[BFMGI07]{bar+}
J.W. Barrett, J.~Faria~Martins, and J.M. Garc{\'\i}a-Islas.
\newblock {Observables in the Turaev-Viro and Crane-Yetter models}.
\newblock {\em J. Math. Phys.}, 48(9):093508, 2007.

\bibitem[CFMP07]{CPFM}
F.~Costantino, R.~Frigerio, B.~Martelli, and C.~Petronio.
\newblock Triangulations of 3-manifolds, hyperbolic relative handlebodies, and
  {D}ehn filling.
\newblock {\em Comment. Math. Helv.}, 82(4):903--933, 2007.

\bibitem[CGvdV15]{volconjpoly}
F.~Costantino, F.~Gu{\'e}ritaud, and R.~van~der Veen.
\newblock On the volume conjecture for polyhedra.
\newblock {\em Geom. Dedicata}, 179(1):385--409, 2015.

\bibitem[CM]{chenmur}
Q.~Chen and J.~Murakami.
\newblock {Asymptotics of Quantum 6$j$-Symbols}.
\newblock {\em arXiv preprint math.GT/1706.04887}.

\bibitem[Cos07]{C}
F.~Costantino.
\newblock {$6j$}-symbols, hyperbolic structures and the volume conjecture.
\newblock {\em Geom. Topol.}, 11:1831--1854, 2007.

\bibitem[CT08]{CosThurston}
F.~Costantino and D.~Thurston.
\newblock 3-manifolds efficiently bound 4-manifolds.
\newblock {\em J. Topol.}, 1(3):703--745, 2008.

\bibitem[CY18]{cyvolconj}
Q.~Chen and T.~Yang.
\newblock {Volume conjectures for the Reshetikhin--Turaev and the Turaev--Viro
  invariants}.
\newblock {\em Quantum Topol.}, 9(3):419--460, 2018.

\bibitem[DKY18]{DKY}
R.~Detcherry, E.~Kalfagianni, and T.~Yang.
\newblock {Turaev-Viro invariants, colored Jones polynomials, and volume}.
\newblock {\em Quantum Topol.}, 9(4):775--813, 2018.

\bibitem[Fle73]{fle}
H.~Fleischner.
\newblock The uniquely embeddable planar graphs.
\newblock {\em Discrete Math.}, 4(4):347--358, 1973.

\bibitem[Hat91]{hatcher}
A.~Hatcher.
\newblock On triangulations of surfaces.
\newblock {\em Topology and its Applications}, 40(2):189--194, 1991.

\bibitem[KL94]{kauflins}
L.H. Kauffman and S.~Lins.
\newblock {\em {Temperley-Lieb recoupling theory and invariants of
  3-manifolds}}.
\newblock Princeton University Press, 1994.

\bibitem[KM18]{murkolp}
A.~Kolpakov and J.~Murakami.
\newblock {Combinatorial Decompositions, Kirillov--Reshetikhin Invariants, and
  the Volume Conjecture for Hyperbolic Polyhedra}.
\newblock {\em Experimental Mathematics}, 27(2):193--207, 2018.

\bibitem[Lic93]{lickorish}
W.B.R. Lickorish.
\newblock The skein method for three-manifold invariants.
\newblock {\em J. Knot Theory Ramifications}, 2(02):171--194, 1993.

\bibitem[Oht18]{ohtdf}
T.~Ohtsuki.
\newblock {On the asymptotic expansion of the quantum $SU(2)$ invariant at
  $q=\exp(4\pi\sqrt{-1}/N)$ for closed hyperbolic $3$-manifolds obtained by
  integral surgery along the figure-eight knot}.
\newblock {\em Algebr. Geom. Topol.}, 18(7):4187--4274, 2018.

\bibitem[RH93]{rivhodg}
I.~Rivin and C.~Hodgson.
\newblock A characterization of compact convex polyhedra in hyperbolic 3-space.
\newblock {\em Invent. Math.}, 111(1):77--111, 1993.

\bibitem[Ste22]{steinitz}
E.~Steinitz.
\newblock Polyeder und raumeinteilungen.
\newblock {\em Encyk. der Math. Wiss.}, 12:38--43, 1922.

\bibitem[Thu79]{thurston}
W.~Thurston.
\newblock {\em The geometry and topology of three-manifolds}.
\newblock Princeton University Princeton, NJ, 1979.

\bibitem[TV92]{TV}
V.~G. Turaev and O.~Y. Viro.
\newblock {State sum invariants of {$3$}-manifolds and quantum {$6j$}-symbols}.
\newblock {\em {Topology}}, 4:865---902, 1992.

\bibitem[Ush06]{ushi}
A.~Ushijima.
\newblock A volume formula for generalised hyperbolic tetrahedra.
\newblock In {\em Non-Euclidean geometries}, pages 249--265. Springer, 2006.

\bibitem[vdV09]{vdv}
R.~van~der Veen.
\newblock The volume conjecture for augmented knotted trivalent graphs.
\newblock {\em Algebr. Geom. Topol.}, 9(2):691--722, 2009.

\bibitem[Whi31]{whitney}
H.~Whitney.
\newblock Non-separable and planar graphs.
\newblock {\em Proceedings of the National Academy of Sciences},
  17(2):125--127, 1931.

\bibitem[WY20]{wongyangvolume}
K.~H. Wong and T.~Yang.
\newblock On the volume conjecture for hyperbolic {D}ehn-filled $3 $-manifolds
  along the figure-eight knot.
\newblock {\em arXiv preprint arXiv:2003.10053}, 2020.

\bibitem[Yok96]{yok}
Y.~Yokota.
\newblock Topological invariants of graphs in 3-space.
\newblock {\em Topology}, 35(1):77--87, 1996.

\end{thebibliography}

\end{document}